\numberwithin{equation}{section}
\newtheorem{theorem}{Theorem}[section]
\theoremstyle{definition}
\theoremstyle{remark}
\newtheorem{rem}[theorem]{Remark}
\numberwithin{equation}{section}
\definecolor{red}{rgb}{1.0, 0.0, 0.0}
\newcommand{\Bea}{\begin{eqnarray*}}
	\newcommand{\Eea}{\end{eqnarray*}}
\newcommand{\Be} {\begin{equation*}}
	\newcommand{\Ee} {\end{equation*}}
\newcommand{\be} {\begin{equation}}
	\newcommand{\ee} {\end{equation}}
\newcommand{\bea} {\begin{eqnarray}}
	\newcommand{\eea} {\end{eqnarray}}
\newcommand{\HH}{\mathbb{H}^n}
\newcommand{\G}{\mathbb G}
\newcommand{\g}{\mathfrak{g}}
\newcounter{alphabet}
\newcommand{\bysame}{\leavevmode\hbox to3em{\hrulefill}\,}
\title[Higher order hypoelliptic damped wave equations]
{Higher order hypoelliptic damped wave equations on   graded lie  groups with  data from negative order Sobolev spaces}
\author{Aparajita Dasgupta} 
\address{Aparajita Dasgupta \endgraf Department of Mathematics
	\endgraf Indian Institute of Technology  Delhi
	\endgraf Delhi, 110016  India.} 
\email{adasgupta@maths.iitd.ac.in}
\author{Vishvesh Kumar} 
\address{Vishvesh Kumar  \endgraf Department of Mathematics: Analysis, Logic and Discrete Mathematics	\endgraf Ghent University \endgraf Krijgslaan 281, Building S8,	B 9000 Ghent, Belgium.} \email{Vishvesh.Kumar@UGent.be and vishveshmishra@gmail.com}
\author{Shyam Swarup Mondal} \address{Shyam Swarup Mondal    \endgraf Department of Mathematics: Analysis, Logic and Discrete Mathematics\endgraf Ghent University\endgraf Krijgslaan 281, Building S8,	B 9000 Ghent, Belgium} \email{mondalshyam055@gmail.com}
\author{Michael Ruzhansky} \address{Michael Ruzhansky  \endgraf Department of Mathematics: Analysis, Logic and Discrete Mathematics\endgraf Ghent University\endgraf Krijgslaan 281, Building S8,	B 9000 Ghent, Belgium\endgraf and\endgraf School of Mathematical Sciences\endgraf Queen Mary University of London, United Kingdom} \email{michael.ruzhansky@ugent.be}
\keywords{Graded Lie groups, Semilinear damped wave equation,  Critical exponent,  Negative order Sobolev spaces,  Global existence, Finite blow-up} \subjclass[2020]{Primary 43A80, 35L15,   35L71, 35A01; Secondary  35L15, 35B33, 35B44, }
\date{\today}
\begin{document}
	\allowdisplaybreaks

	\begin{abstract} 
		Let $\mathbb G$ be a graded Lie group with homogeneous dimension $Q$. In this paper, we study the Cauchy problem for a semilinear hypoelliptic damped wave equation involving a positive Rockland operator $\mathcal{R}$ of homogeneous degree $\nu\geq 2$ on $\mathbb G$  with power type nonlinearity $|u|^p$ and initial data taken from negative order homogeneous Sobolev space $\dot H^{-\gamma}(\mathbb G), \gamma>0$.  In the framework of Sobolev spaces of negative order, we prove that $p_{\text{Crit}}(Q, \gamma, \nu) :=1+\frac{2\nu}{Q+2\gamma}$ is the new critical exponent for  $\gamma\in (0, \frac{Q}{2})$. More precisely, we show the global-in-time existence of small data Sobolev solutions of lower regularity for $p>p_{\text{Crit}}(Q, \gamma, \nu) $ in the energy evolution space $ \mathcal{C}\left([0, T], H^{s}(\mathbb{G})\right), s\in (0, 1]$. Under certain conditions on the initial data, we also prove a finite-time blow-up of weak solutions for $1<p<p_{\text{Crit}}(Q, \gamma, \nu)$.  Furthermore, to precisely characterize the blow-up time, we derive sharp upper bound and lower bound estimates for the lifespan in the subcritical cases.  We emphasize that our results are also new, even in the setting of higher-order differential operators on $\mathbb{R}^n$,  and more generally, on stratified Lie groups.
	\end{abstract}
	\maketitle
	\tableofcontents

 \section{Introduction}\label{sec1}
	In this work, we are interested in the study of the Cauchy problem for the semilinear damped wave equation with power-type nonlinearities:
	\begin{align} \label{eq0010}
		\begin{cases}
			u_{tt}+\mathcal{R}u +u_{t} =|u|^p, & x\in  \mathbb{G},~t>0,\\
			u(0,x)=\varepsilon u_0(x),  & x\in  \mathbb{G},\\ u_t(0, x)=\varepsilon u_1(x), & x\in  \mathbb{G},
		\end{cases}
	\end{align}
	where  $1<p<\infty$, $\mathcal{R}$ is a positive Rockland operator of homogeneous degree $\nu \geq 2$ on a graded Lie group $\mathbb{G}$,  and the initial data $(u_0, u_1)$ with its size parameter $\varepsilon>0$  belongs to  homogeneous Sobolev spaces of negative order $  \dot {H}^{-\gamma}(\mathbb{G}) \times  \dot {H}^{-\gamma}(\mathbb{G})$ with $\gamma>0$. Here we note that the commutative group $(\mathbb{R}^n,+),$ the Heisenberg group,  Engel groups, Cartan groups, and more generally, stratified Lie groups are examples of graded Lie groups.

	M\"uller and Stein \cite{MS99} initiated the analysis of wave equations associated with sub-Laplacian on the Heisenberg group  and proved $L^p$-estimates of wave kernels. Later, M\"uller and Seeger \cite{MS15} proved sharp $L^p$-estimates of wave kernels on groups of Heisenberg type. 
	Bahouri, Ge\'rard, and Xu \cite{Bahouri} began a systematic study of damped wave equations involving sub-Laplacian on the Heisenberg group  to prove a weak type decay rate in the dispersive estimate and local Strichartz estimates, which are mainly based on the analysis in Besov type spaces. In \cite{Fermanian}, the authors extended these results in the setting of step 2 stratified Lie groups and showed that the decay rate of the solution might be influenced by the dimension of the center of the group. Nevertheless, a better decay rate can be achieved when the wave equation for the full Laplacian on the Heisenberg group is taken into account, see \cite{FMV, Manli}.  See also \cite{manli1} for decay estimates for a class of wave equations on the Heisenberg group. 
 We also refer to \cite{attract1, attract2}, where damped hyperbolic equations were studied using attractors.

	In \cite{Vla}, Georgiev and Palmieri investigated the global existence and nonexistence results for the Cauchy problem for the semilinear damped wave equation on the Heisenberg group with the power type nonlinear (such as  $|u|^p$) by finding the critical exponent of Fujita-type. The meaning of critical exponent is the threshold condition on the exponent $p$ for the global-in-time Sobolev solutions and the blow-up of local-in-time weak solutions with small data.  Its counterpart in the Euclidean framework, the critical exponent for the semilinear damped wave equation with the power type nonlinear, can be found in \cite{IKeta and Tanizawa, Matsumura, Zhang, Todorova} and references therein. Moreover, Palmieri  \cite{Palmieri 2020} derived the $L^2$-decay estimates for solutions as well as for their time derivative, and their horizontal gradient of solution to the homogeneous linear damped wave equation on the Heisenberg group.  In \cite{30}, the authors considered the Cauchy problem for the semilinear damped wave equation for the sub-Laplacian on the Heisenberg group with power-like nonlinearities and proved global in time well-posedness for small data in the case of the positive mass and damping term.  Moreover,   the estimates for the linear viscoelastic type damped wave equation on the Heisenberg group can be found in  \cite{32}.  We also refer to \cite{27, 28,garetto, RY20, BKM22, DKM23} concerning the damped wave equation on compact Lie groups, \cite{gra1, gra3} for wave equations on graded Lie groups,  and \cite{AKR22} for Riemannian symmetric spaces of non-compact type.


	In recent years, considerable attention has been devoted by several researchers to find new critical exponents for the semilinear damped wave equations in different frameworks. For instance, in the Euclidean setting, for the semilinear damped wave equations with initial data belonging to $L^m$-regular data instead of $L^1$-data with $m \in(1,2)$, the critical exponent is changed from $p_{\text {Crit}}(n)=1+\frac{2 }{n}$ to the new modified critical exponent $p_{\text {Crit}}(n):=p_{\text {Crit }}\left(\frac{n}{m}\right)=1+\frac{2 m}{n}$, see \cite{Ikeda2002,Ikeda2019,Nakao93}.  Furthermore, for initial data additionally belonging to homogeneous Sobolev spaces $\dot H^{-\gamma}(\mathbb{R}^n)$ of negative order  $-\gamma$, the new critical exponent becomes $ p_{\text{Crit}}(n, \gamma):= 1+ \frac{4}{n+2\gamma}$ for some  $\gamma\in (0, \frac{n}{2})$, see \cite{Reissig} for more detail. In the non-Euclidean settings, specifically, for the semilinear damped wave equation associated with sub-Laplacian on the Heisenberg group $\mathbb{H}^n$, we \cite{DKMR} proved that the critical exponent $1+ \frac{2}{Q}$ for $L^1$-regular data is changed into the modified critical exponent $ p_{\text{Crit}}(Q, \gamma):= 1+ \frac{4}{Q+2\gamma}$ for some  $\gamma\in (0, \frac{Q}{2})$ if initial data is localized in the homogeneous Sobolev spaces of negative order   $\dot H^{-\gamma}(\mathbb{H}^n)$, where $Q:=2n+2$ is the homogeneous dimension on $\mathbb{H}^n.$


One of the important features in the negative order Sobolev space $\dot{H}^{-\gamma}, \gamma>0,$ is that the corresponding Sobolev norm of initial data enhances the faster decay rate of the solution compared to the $L^2$-decay. 
The study of different kinds of partial differential equations in the framework of Sobolev space of negative order  is not a new topic.  For example, one can see \cite{GW, Umakoshi, Yao} for semilinear heat equation with initial data in negative Sobolev spaces. We also refer to \cite{GW, Tao,Ohhh,Wang} and references therein for recent works in the direction of negative order Sobolev space.

	The main aim of this paper is to find  a critical exponent for the semilinear hypoelliptic damped wave equation (\ref{eq0010})  associated with a positive Rockland operator $\mathcal{R}$  on a graded Lie group $\G$   with power type nonlinearity $|u|^p$ and initial data taken from negative order homogeneous Sobolev space $\dot H^{-\gamma}(\mathbb G), \gamma>0$. 
 
 Recall that a connected and simply connected Lie group $\mathbb{G}$ is a graded Lie group if its Lie algebra $\mathfrak{g}$ is {\it graded}, that is, $\mathfrak{g}$ admits a vector space decomposition of the form $\mathfrak{g}= \bigoplus_{i=1}^\infty \mathfrak{g}_i,$ for which all but finitely many $\mathfrak{g}_i$'s are $\{0\}$ such that $[\mathfrak{g}_i, \mathfrak{g}_j] \subset \mathfrak{g}_{i+j}$ for all $i, j \in \mathbb{N}.$ We refer to Section \ref{sec2} for a detailed description of the graded Lie groups.  If the first stratum $\mathfrak{g}_1$  generates the Lie algebra $\g$ as an algebra, the group $\mathbb{G} $ is called a {stratified Lie group}. In this case, the sum of squares of a basis of vector fields in $\mathfrak{g}_1$ gives a sub-Laplacian on $\mathbb{G}$. This immediately shows that every stratified Lie group is graded. However, if the group $\mathbb{G}$ is non-stratified, then it may not have a homogeneous sub-Laplacian or Laplacian but they always posses Rockland operators. 	A Rockland operator on $\mathbb{G}$ is a left-invariant hypoelliptic differential operator of a positive homogeneous degree $\nu$, see Subsection \ref{Rocksec} for an overview.  
	The Heisenberg group, more generally, $H$-type groups, Engel groups, and Cartan groups are examples of graded Lie groups. The following are some examples of graded Lie groups with a Rockland operator  which are included in the analysis of this paper.
 
	\begin{itemize}
		\item When $\mathbb{G}=(\mathbb{R}^n,+),$ a Rockland operator $ \mathcal{R}$ can be any positive homogeneous elliptic differential operator with constant coefficients, for example, we can  consider
\begin{align}\label{R^n}
		\mathcal{R}=(-\Delta)^m \text { or } \mathcal{R}=(-1)^m \sum_{j=1}^n a_j\left(\frac{\partial}{\partial x_j}\right)^{2 m}, \quad a_j>0, m \in \mathbb{N},
\end{align}
  which are Rockland operators with homogeneous degree $2m$ when the commutative group $\mathbb{R}^n$ is equipped with isotropic dilations. 
		\item When  $\mathbb{G}=\mathbb{H}^n,$ the Heisenberg group, we can consider the Rockland operator of the homogeneous degree $2m$ as  
$$ 		\mathcal{R}=(-\mathcal{L})^m \text { or } \mathcal{R}=(-1)^m \sum_{j=1}^n\left(a_j X_j^{2 m}+b_j Y_j^{2 m}\right), \quad a_j, b_j>0, m \in \mathbb{N},
$$		where $X_j=\partial_{x_j}-\frac{y_j}{2} \partial_t,  Y_j=\partial_{y_j}+\frac{x_j}{2} \partial_t$ are the    left-invariant vector fields for its algebra $\g$ and     	$\mathcal{L}=\sum_{j=1}^n\left(X_j^2+Y_j^2\right)$  is the sub-Laplacian on    $\mathbb{H}^n$.\\
		\item When  $\mathbb{G}$ is a stratified Lie group,  then $\mathcal{L}_{\mathbb G}$, defined in  (\ref{stratified})  
  is  a positive Rockland operator with    homogeneous degree $\nu = 2$.\\
		\item When $\mathbb{G}$ is a graded Lie group with dilation weights $\nu_1, \ldots, \nu_n$,  if $\nu_0$ is any common multiple of $\nu_1, \ldots, \nu_n$, then the operators given by 
		\begin{equation}
			\mathcal{R}:= \sum_{j=1}^n(-1)^{\frac{v_0}{v_j}} a_j X_j^{2 \frac{v_0}{v_j}}, \quad \text{with}\,\, a_1, a_2, \ldots, a_n>0,
		\end{equation}
		are  positive Rockland operators of homogeneous degree $\nu=2\nu_0$  	for any strong Malcev basis $\{X_1, X_2, \ldots, X_n\}$ of the Lie algebra $\g$.
	\end{itemize}
	

	For the semilinear hypoelliptic damped wave equation (\ref{eq0010})  for a positive Rockland operator $\mathcal{R}$  of homogeneous degree $\nu$   with power type nonlinearity $|u|^p$ on the graded Lie group $\mathbb{G}$, we show that  $p_{\text{Crit}}(Q, \gamma, \nu):=1+\frac{2\nu}{Q+2\gamma}$ is the critical exponent for some  $\gamma\in (0, \frac{Q}{2})$  	when the initial data are additionally taken from homogeneous Sobolev space of negative order $\dot {H}^{-\gamma}(\mathbb{G} )$. We note that our analysis on this new critical exponent $p_{\text{Crit}}(Q, \gamma, \nu):=1+\frac{2\nu}{Q+2\gamma}$ generalizes several known results. For example,
\begin{itemize}
    \item {\bf  When $\mathbb G=\mathbb{R}^n$ and $\mathcal{R}=-\Delta$}: the critical exponent   $p_{\text{Crit}}(Q, \gamma, \nu)$ coincides with the critical exponent 
 $p_{\text{Crit}}(n, \gamma):= 1+ \frac{4}{n+2\gamma}$ for some  $\gamma\in (0, \frac{n}{2})$ considered in  \cite{Reissig}. 
    \item {\bf When  $\mathbb{G}=\mathbb{H}^n,$ the Heisenberg group and $\mathcal{R}=-\mathcal{L}$, sub-Laplacian on  $\mathbb{H}^n$}:   the critical exponent  $p_{\text{Crit}}(Q, \gamma, \nu)$ reduce to the critical exponent $ p_{\text{Crit}}(Q, \gamma):= 1+ \frac{4}{Q+2\gamma}$ for some  $\gamma\in (0, \frac{Q}{2})$ considered in \cite{DKMR}.
  \item The critical exponent $p_{\text{Crit}}(Q, \gamma, \nu)$  is new even in the setting of higher-order homogeneous differential operators (such as powers of negative Laplacian) (\ref{R^n})
on $\mathbb{R}^n$, and, more generally, for a negative sublaplacian and its powers
on a stratified Lie group  $\mathbb{G}$.
\end{itemize}  
The following $\dot {H}^s(\G)$-norm estimate of the solution will play a vital role in the proof of the global-in-time existence result.
	\begin{theorem}\label{Linear} Let ${\mathbb{G}}$ be a graded Lie group of homogeneous dimension $Q$ and let $\mathcal{R}$ be a positive Rockland operator of homogeneous degree $\nu.$
		Assume that $\left(u_0, u_1\right) \in\ (H ^s \cap \dot {H} ^{-\gamma}\ ) \times\ (H ^{s-1} \cap  \dot{H} ^{-\gamma}\ )$ with  $s \geq 0$ and $s+\gamma \geq  0$. 
		Then the  solution  of  the   linear Cauchy problem   	\begin{align}\label{Linear-system}
			\begin{cases}
				u_{tt}+\mathcal{R}u +u_{t} =0, & x\in  \mathbb{G},~t>0,\\
				u(0,x)= u_0(x),  & x\in  \mathbb{G},\\ u_t(0, x)= u_1(x), & x\in  \mathbb{G},
			\end{cases}
		\end{align}
		satisfies the following 
		$ \dot{H} ^s$-decay estimate	\begin{align}\label{hom}
			\|u(t, \cdot)\|_{ \dot {H}^s} \lesssim(1+t)^{-\frac{s+\gamma}{\nu}}\left(\left\|u_0\right\|_{H^s \cap\dot {H}^{-\gamma}}+\left\|u_1\right\|_{H^{s-1} \cap \dot{H}^{-\gamma}}\right) ,
		\end{align}
		for any $t\geq 0$. 
	\end{theorem}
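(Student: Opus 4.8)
The natural plan is to pass to the frequency side using the group Fourier transform adapted to the Rockland operator $\mathcal{R}$, exactly as one does with the Euclidean Fourier transform for the classical damped wave equation. Since $\mathcal{R}$ is a positive Rockland operator, it admits a spectral decomposition, and the Plancherel theorem together with the functional calculus lets us write the homogeneous $\dot H^s$-norm as an integral (over the dual object, i.e.\ involving the spectral measure of $\mathcal{R}$) of the solution weighted by powers of the spectral parameter. Writing $\lambda$ for the spectral variable attached to $\mathcal{R}$, the $\dot H^s$-norm is controlled by $\lambda^{s/\nu}$ times the solution, while the $\dot H^{-\gamma}$-norm contributes a factor $\lambda^{-\gamma/\nu}$ on the low-frequency side.

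\textbf{Reduction to an ODE.}
First I would apply the Fourier transform to the linear system \eqref{Linear-system}, reducing it for each fixed spectral parameter to the scalar second-order ODE
\begin{equation*}
\widehat{u}_{tt}+\lambda\,\widehat{u}+\widehat{u}_t=0,\qquad \widehat{u}(0)=\widehat{u_0},\quad \widehat{u}_t(0)=\widehat{u_1},
\end{equation*}
whose characteristic roots are $\mu_{\pm}=\tfrac12\bigl(-1\pm\sqrt{1-4\lambda}\bigr)$. Solving explicitly and examining the two regimes separately is the heart of the matter: for large $\lambda$ (oscillatory regime, $\lambda>1/4$) both roots have real part $-1/2$, giving exponential decay $e^{-t/2}$; for small $\lambda$ (overdamped regime, $\lambda<1/4$) the dangerous root behaves like $-\lambda+O(\lambda^2)$, producing the slowly decaying factor $e^{-c\lambda t}$ that dictates the polynomial rate. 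The goal is to derive pointwise multiplier bounds of the form $|\widehat{u}(t)|\lesssim e^{-c\lambda t}(|\widehat{u_0}|+|\widehat{u_1}|)$ at low frequency and $|\widehat{u}(t)|\lesssim e^{-t/2}(|\widehat{u_0}|+|\widehat{u_1}|)$ at high frequency.

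\textbf{Assembling the decay estimate.}
Then I would insert these multiplier bounds into the Plancherel expression for $\|u(t,\cdot)\|_{\dot H^s}$, splitting the spectral integral at $\lambda\sim 1$. On the high-frequency part, the uniform factor $e^{-t/2}$ and the $H^s$, $H^{s-1}$ control of the data make that contribution decay exponentially, hence dominated by the claimed polynomial rate. On the low-frequency part I would extract the weight $\lambda^{s/\nu}e^{-c\lambda t}$ and absorb the negative-order data norm: writing $\lambda^{s/\nu}=\lambda^{(s+\gamma)/\nu}\lambda^{-\gamma/\nu}$, the factor $\lambda^{-\gamma/\nu}$ is exactly what the $\dot H^{-\gamma}$-norm of the data supplies, while the remaining $\sup_{\lambda>0}\lambda^{(s+\gamma)/\nu}e^{-c\lambda t}\lesssim (1+t)^{-(s+\gamma)/\nu}$ is a standard scaling estimate producing the decay rate in \eqref{hom}. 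The hypotheses $s\geq0$ and $s+\gamma\geq0$ are precisely what guarantee the nonnegative powers of $\lambda$ are integrable against the data norms.

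\textbf{Main obstacle.}
The principal subtlety, in contrast to the Euclidean case, is that the Fourier transform on a graded Lie group is operator-valued: $\widehat{u}(\pi)$ is an operator on the representation space of $\pi$, and $\mathcal{R}$ acts through the (possibly unbounded) operator $\pi(\mathcal{R})$. Thus the scalar roots $\mu_\pm$ must be understood via the spectral calculus of the self-adjoint positive operator $\pi(\mathcal{R})$, and the multiplier estimates have to hold at the operator level, uniformly in $\pi$. I expect the bookkeeping needed to justify the functional calculus for the matrix-valued symbols, together with the homogeneity properties of the Plancherel measure (which make the $\lambda^{\pm/\nu}$ weights behave correctly under the dilation structure), to be the part demanding the most care; once the symbolic calculus is in place, the remaining estimates reduce to the scalar computation sketched above.
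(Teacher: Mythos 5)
Your proposal follows essentially the same route as the paper's proof: the group Fourier transform reduces the problem, via the diagonalization of $\pi(\mathcal{R})$ in an orthonormal basis (its spectrum being discrete and positive), to the scalar damped-oscillator ODE, whose characteristic roots are analyzed in the three regimes of small, intermediate, and large spectral parameter to obtain pointwise propagator bounds; these are then fed into Plancherel, with the low-frequency splitting $\lambda^{s/\nu}=\lambda^{(s+\gamma)/\nu}\lambda^{-\gamma/\nu}$ absorbing the $\dot H^{-\gamma}$ data norm and the scaling bound $\lambda^{(s+\gamma)/\nu}e^{-c\lambda t}\lesssim (1+t)^{-(s+\gamma)/\nu}$ producing the decay rate. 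This is exactly how the paper proceeds, including the operator-valued bookkeeping you correctly identify as the main technical point.
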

	Using the above $\dot {H}^s$-norm estimate, we will prove a global-in-time existence of small data Sobolev solutions to (\ref{eq0010})  of lower regularity as follows:
	\begin{theorem}\label{well-posed}  Let ${\mathbb{G}}$ be a graded Lie group of homogeneous dimension $Q$ and let $\mathcal{R}$ be a positive Rockland operator of homogeneous degree $\nu \geq 2.$
		Let $s \in(0,1]$ and $\gamma \in\left(0, \frac{Q}{2}\right)$. Assume that an exponent $p$ satisfies
		\begin{align}\label{eq24}
			1<p \leq \frac{Q}{Q-2 s}\quad\text{and} \quad 	p\left\{\begin{array}{ll}
				>p_{\text {Crit }}(Q, \gamma, \nu):=1+\frac{2\nu}{Q+2\gamma} & \text { if } \gamma \leq \tilde{\gamma}, \\
				\geq 1+\frac{2 \gamma}{Q} & \text { if } \gamma>\tilde{\gamma}, 
			\end{array}\right. 
		\end{align}
		where $\tilde{\gamma}$ denotes the positive root of  the quadratic equation $2 \tilde{\gamma}^2+Q \tilde{\gamma}-\nu Q=0$, i.e., $\tilde{\gamma}= \frac{-Q+\sqrt {Q^2+8\nu Q}}{4}.$

  Then, there exists a small positive constant $\varepsilon_0$ such that for any $\left(u_0, u_1\right) \in \mathcal{A}^{s }:=\ (H^s \cap \dot{H}^{-\gamma}\ ) \times\ (L^2 \cap \dot{H}^{-\gamma}\ )$ satisfying $\left\|\left(u_0, u_1\right)\right\|_{\mathcal{A}^{s}}=\varepsilon \in\left(0, \varepsilon_0\right]$, the Cauchy problem for the semilinear damped wave equation (\ref{eq0010}) has a uniquely determined Sobolev solution
	$$
	u \in \mathcal{C}\left([0, \infty), H^s\right).
	$$

	\end{theorem}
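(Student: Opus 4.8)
The plan is to recast \eqref{eq0010} as a fixed-point problem via Duhamel's principle and to solve it by the Banach fixed-point theorem in a weighted evolution space whose weights are read off from the linear decay estimate \eqref{hom}. Let $E_1(t)$ denote the propagator sending a datum $(0,g)$ to the solution of the homogeneous problem \eqref{Linear-system}; then $u$ solves \eqref{eq0010} if and only if it is a fixed point of
\[
N[u](t):=u^{\mathrm{lin}}(t)+\int_0^t E_1(t-\tau)\,|u(\tau)|^p\,d\tau,
\]
where $u^{\mathrm{lin}}$ solves \eqref{Linear-system} with data $(\varepsilon u_0,\varepsilon u_1)$. Guided by \eqref{hom}, I would work in the space $X$ of those $u\in\mathcal{C}([0,\infty),H^s)$ for which
\[
\|u\|_X:=\sup_{t\ge0}\Big[(1+t)^{\gamma/\nu}\|u(t)\|_{L^2}+(1+t)^{(s+\gamma)/\nu}\|u(t)\|_{\dot H^s}\Big]<\infty.
\]

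First I would bound the linear part: applying \eqref{hom} once with exponent $s$ and once with exponent $0$ yields $\|u^{\mathrm{lin}}\|_X\lesssim\varepsilon\|(u_0,u_1)\|_{\mathcal{A}^s}$. The core of the argument is the nonlinear term. For each fixed $\tau$ I regard $|u(\tau)|^p$ as a velocity datum and invoke \eqref{hom} for both the $\dot H^s$- and the $L^2=\dot H^0$-norm; since $s\le1$ the relevant negative-order norms $H^{s-1}$ and $H^{-1}$ of the datum are dominated by its $L^2$-norm, so everything reduces to controlling $\||u(\tau)|^p\|_{L^2}=\|u(\tau)\|_{L^{2p}}^p$ and $\||u(\tau)|^p\|_{\dot H^{-\gamma}}$. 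For the latter I use the Sobolev embedding $L^m\hookrightarrow\dot H^{-\gamma}$ with $\tfrac1m=\tfrac12+\tfrac{\gamma}{Q}$, reducing it to $\|u(\tau)\|_{L^{mp}}^p$. The Gagliardo--Nirenberg inequality on $\mathbb{G}$,
\[
\|u\|_{L^q}\lesssim\|u\|_{L^2}^{1-\theta_q}\,\|u\|_{\dot H^s}^{\theta_q},\qquad\theta_q=\tfrac{Q}{s}\big(\tfrac12-\tfrac1q\big),
\]
then turns both quantities into products of powers of the $X$-norm and of $(1+\tau)$. The admissibility requirement $0\le\theta_q\le1$ forces precisely $p\le\frac{Q}{Q-2s}$ (taking $q=2p$) and $p\ge1+\frac{2\gamma}{Q}$ (taking $q=mp$, i.e.\ $\theta_{mp}\ge0$), which are exactly the range conditions in \eqref{eq24}. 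A short computation shows the slower decay comes from the $\dot H^{-\gamma}$-part, of order $(1+\tau)^{-a_\gamma}$ with $a_\gamma=\frac{(p-1)(Q+2\gamma)}{2\nu}$.

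It then remains to carry out the $\tau$-integration using the elementary convolution bound $\int_0^t(1+t-\tau)^{-\alpha}(1+\tau)^{-\beta}\,d\tau\lesssim(1+t)^{-\min\{\alpha,\beta\}}$, valid when $\max\{\alpha,\beta\}>1$, with $\alpha\in\{\gamma/\nu,(s+\gamma)/\nu\}$ and $\beta=a_\gamma$. This produces $\|N[u]\|_X\lesssim\varepsilon\|(u_0,u_1)\|_{\mathcal{A}^s}+\|u\|_X^p$ exactly when $a_\gamma>1$, and one checks $a_\gamma>1\Leftrightarrow p>p_{\mathrm{Crit}}(Q,\gamma,\nu)$. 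Since the two thresholds $p_{\mathrm{Crit}}(Q,\gamma,\nu)$ and $1+\frac{2\gamma}{Q}$ coincide exactly at the positive root $\tilde\gamma$ of $2\tilde\gamma^2+Q\tilde\gamma-\nu Q=0$ and interchange order there, the hypotheses \eqref{eq24} are precisely what simultaneously guarantee Gagliardo--Nirenberg admissibility and convergence of the time integral with the correct decay in each of the two cases. Replacing $|u|^p$ by $|u|^p-|v|^p$ and using $\big||u|^p-|v|^p\big|\lesssim(|u|^{p-1}+|v|^{p-1})|u-v|$ with H\"older's inequality and the same Gagliardo--Nirenberg estimates gives the Lipschitz bound $\|N[u]-N[v]\|_X\lesssim(\|u\|_X^{p-1}+\|v\|_X^{p-1})\|u-v\|_X$. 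Taking $\varepsilon_0$ small enough, $N$ maps a small closed ball of $X$ into itself and is a contraction there, so the Banach fixed-point theorem provides the unique solution $u\in\mathcal{C}([0,\infty),H^s)$.

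The step I expect to be the main obstacle is this nonlinear estimate: one must keep both Gagliardo--Nirenberg exponents in $[0,1]$ and, at the same time, force the $\tau$-integral to converge with the sharp decay, and it is exactly this bookkeeping that manufactures the critical exponent $p_{\mathrm{Crit}}(Q,\gamma,\nu)$. The case distinction at $\tilde\gamma$ must be treated with care, since whichever of $p>p_{\mathrm{Crit}}(Q,\gamma,\nu)$ or $p\ge1+\frac{2\gamma}{Q}$ is the binding constraint has to be shown to secure both requirements at once.
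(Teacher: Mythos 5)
Your overall architecture coincides with the paper's: the same Duhamel reformulation, the same weighted space with norm $\sup_t[(1+t)^{\gamma/\nu}\|u(t)\|_{L^2}+(1+t)^{(s+\gamma)/\nu}\|u(t)\|_{\dot H^s}]$, the linear decay estimate of Theorem \ref{Linear} applied to $|u(\tau)|^p$ viewed as a velocity datum, Hardy--Littlewood--Sobolev plus Gagliardo--Nirenberg to control $\||u(\tau)|^p\|_{L^2\cap\dot H^{-\gamma}}$, and Banach's fixed point theorem; your derivation of the constraints $p\le\frac{Q}{Q-2s}$, $p\ge1+\frac{2\gamma}{Q}$ and of the decay rate $a_\gamma=\frac{(p-1)(Q+2\gamma)}{2\nu}$ is exactly the paper's. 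However, there is a genuine gap in the time-integration step for the $\dot H^s$-component. You apply the convolution bound $\int_0^t(1+t-\tau)^{-\alpha}(1+\tau)^{-a_\gamma}\,d\tau\lesssim(1+t)^{-\min\{\alpha,a_\gamma\}}$ with $\alpha=(s+\gamma)/\nu$ and assert that the estimate closes exactly when $a_\gamma>1$. That is not enough: to recover the weight $(1+t)^{-(s+\gamma)/\nu}$ demanded by the $X$-norm you also need $a_\gamma\ge\frac{s+\gamma}{\nu}$, and this can fail under the hypotheses of the theorem. For instance, take $Q=5$, $\nu=2$, $s=1$, $\gamma=1.3$ (so $\gamma<\tilde\gamma\approx1.312$) and $p=1.6\in\left(p_{\mathrm{Crit}},\tfrac{Q}{Q-2s}\right]=(1.526\ldots,5/3]$: then $a_\gamma=1.14<1.15=\frac{s+\gamma}{\nu}$, your bound only yields $(1+t)^{-1.14}$, and the contribution of $\tau\in[t-1,t]$ shows this is sharp, so $\sup_t(1+t)^{(s+\gamma)/\nu}\|u^{\mathrm{non}}(t)\|_{\dot H^s}$ is not controlled and the contraction does not close.

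The missing idea, which is how the paper closes the estimate, is to treat the two halves of Duhamel's integral with \emph{different} linear estimates: on $[0,t/2]$ use the $(L^2\cap\dot H^{-\gamma})\to\dot H^s$ decay $(1+t-\tau)^{-(s+\gamma)/\nu}$, pull this factor out as $(1+t)^{-(s+\gamma)/\nu}$, and require only $\int_0^\infty(1+\tau)^{-a_\gamma}\,d\tau<\infty$, i.e.\ $a_\gamma>1$; but on $[t/2,t]$ use instead the $L^2\to\dot H^s$ estimate (which needs no $\dot H^{-\gamma}$ norm of the datum), whose rate is $(1+t-\tau)^{-s/\nu}$ with $s/\nu\le 1/2<1$, paired with the faster decay $(1+\tau)^{-a_\gamma-\gamma/\nu}$ of $\||u(\tau)|^p\|_{L^2}$. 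This second piece is then bounded by $(1+t)^{-a_\gamma-\gamma/\nu+1-s/\nu}\le(1+t)^{-(s+\gamma)/\nu}$ precisely when $a_\gamma\ge1$, so the threshold really is $p>p_{\mathrm{Crit}}(Q,\gamma,\nu)$, but only after this asymmetric splitting. (For the $L^2$-component your uniform treatment happens to work, since $p\ge1+\frac{2\gamma}{Q}$ forces $a_\gamma>\gamma/\nu$.) With this correction the remainder of your argument --- the Lipschitz estimate via $||u|^p-|v|^p|\lesssim(|u|^{p-1}+|v|^{p-1})|u-v|$ and the contraction on a small ball --- goes through as in the paper.
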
 	
	Note that the technical restriction on  $1< p\leq \frac{Q}{Q-2s}$  in the above theorem is due to an application of the  Gagliardo-Nirenberg type inequality. Moreover, some examples for the admissible range of the exponent $p$ for the global-in-time existence result in certain low homogeneous dimension graded Lie group $\mathbb{G}$ are as follows:
	\begin{itemize} 
		\item When $Q=1, 2$, we take $s \in(0,1]$ and $\gamma \in (0,\frac{Q}{2} )$ and the exponent satisfies
		
		$$
		1+\frac{2\nu}{Q+2 \gamma}<p   \left\{\begin{array}{ll}
			<\infty & \text { if } Q \leq 2s, \\
			\leq   \frac{Q}{Q-2s}& \text { if } Q>2s.
		\end{array}\right.  
		$$
		\item When $Q=3, 4$, we take $s \in(0,1]$ and $\gamma \in (0, \frac{Q}{2})$ and the exponent satisfies
		
		$$
		\begin{array}{l} \vspace{0.3cm}
			1+\frac{2\nu}{Q+ 2\gamma}<p \leq  \frac{Q}{Q- 2s} ~\quad \text { if } ~0<\gamma \leq \tilde{\gamma}  \\ 
			1+\frac{ 2\gamma}{Q} \leq  p \leq \frac{Q}{Q- 2s} ~\quad\quad \text { if }  ~\tilde{\gamma} <\gamma<\frac{Q}{2} .
		\end{array}
		$$
		
		%
\end{itemize}


 Let  $|\cdot|$   be any homogeneous norm on the graded Lie group  $\mathbb{G}$, while we denote $\left(1+|x|^2\right)^{\frac{1}{2}}$ by the Japanese bracket      $\langle x\rangle$ for $x\in \mathbb{G}$. Our next result is about the blow-up (in-time)  of weak solutions to the Cauchy problem  (\ref{eq0010})  in the subcritical case $1<p<p(Q, \gamma, \nu)$ under certain additional assumptions on the initial data. 
\begin{theorem} \label{blow-up} Let ${\mathbb{G}}$ be a graded Lie group of    homogeneous dimension $Q$ and let $\mathcal{R}$ be a positive Rockland operator given by
\begin{equation}
			\mathcal{R}:= \sum_{j=1}^n(-1)^{\frac{\nu_0}{\nu_j}} a_j X_j^{2 \frac{\nu_0}{\nu_j}}, \quad \text{with}\,\, a_1, a_2, \ldots, a_n>0,
		\end{equation}
of homogeneous degree $\nu:=2\nu_0,$ where  $\nu_0$ is any common multiple of dilations weights $\nu_1, \ldots, \nu_n$ on $\G$ and  $\{X_1, X_2, \ldots, X_n\}$ is a strong Malcev basis of the Lie algebra $\g$ of $\G.$  Let $\gamma \in\left(0, \frac{Q}{2}\right)$ and the exponent $p$ satisfies $1<p<p(Q, \gamma, \nu) :=1+\frac{2\nu}{Q+2\gamma}$. We   also assume  that the non-negative initial data $\left(u_0, u_1\right) \in \dot{H} ^{-\gamma} \times \dot{H} ^{-\gamma}$ satisfies 
	\begin{align}\label{eq32}
		u_0(x)+u_1(x) &\geq C_1 \langle x \rangle^{-Q\left(\frac{1}{2}+\frac{\gamma}{Q}\right)}(\log (e+|x|))^{-1}, \quad x\in  \mathbb G,
	\end{align}
	where $C_1$ is a positive constant. Then, there is no global (in-time) weak solution to (\ref{eq0010}). 
\end{theorem}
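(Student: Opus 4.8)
The plan is to run the \emph{rescaled test function method} (method of nonlinear capacity), adapted both to the homogeneous structure of $\mathbb{G}$ and to the negative-order data through the precise weight in (1.8). The starting point is the weak formulation: a global weak solution $u\in L^p_{\mathrm{loc}}([0,\infty)\times\mathbb{G})$ must satisfy, for every non-negative $\phi\in C_c^\infty([0,\infty)\times\mathbb{G})$ with $\partial_t\phi(0,\cdot)=0$,
\begin{equation*}
\int_0^\infty\!\!\int_{\mathbb{G}}|u|^p\,\phi\,dx\,dt+\varepsilon\int_{\mathbb{G}}(u_0+u_1)\,\phi(0,\cdot)\,dx=\int_0^\infty\!\!\int_{\mathbb{G}}u\,\big(\partial_t^2\phi-\partial_t\phi+\mathcal{R}\phi\big)\,dx\,dt,
\end{equation*}
which I would derive by two integrations by parts in $t$ together with the formal self-adjointness of $\mathcal{R}$ (guaranteed by its explicit even-power form $\sum_j(-1)^{\nu_0/\nu_j}a_jX_j^{2\nu_0/\nu_j}$). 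The combination $u_0+u_1$ that appears on the left is exactly the quantity controlled from below by (1.8).

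Next I would insert scaled test functions exploiting the dilations $\delta_R$ of $\mathbb{G}$ and the diffusive scaling (the damping makes the long-time behaviour parabolic, so time scales like $R^\nu$). Take $\phi_R(t,x)=\Phi(R^{-\nu}t)\,\Psi(\delta_{1/R}x)^{\ell}$, where $\Phi,\Psi$ are smooth cutoffs equal to $1$ near the origin and supported in $[0,2]$ and $\{|x|\le 2\}$ respectively, and $\ell$ is a large power fixed later. Homogeneity of $\mathcal{R}$ of degree $\nu$ gives $\partial_t^k\phi_R=O(R^{-k\nu})$ and $\mathcal{R}\phi_R=R^{-\nu}\Phi(\cdot)\,(\mathcal{R}\Psi^{\ell})(\delta_{1/R}x)$, with the $t$-derivatives supported in $R^\nu\le t\le 2R^\nu$ and $\mathcal{R}\phi_R$ supported in $R\le|x|\le 2R$. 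Setting $t=0$, invoking (1.8), and integrating in polar coordinates on $\mathbb{G}$ (Haar measure scaling $|\delta_R E|=R^Q|E|$) yields the lower bound
\begin{equation*}
\varepsilon\int_{\mathbb{G}}(u_0+u_1)\,\phi_R(0,\cdot)\,dx\gtrsim\varepsilon\int_{|x|\le R}\langle x\rangle^{-\frac{Q}{2}-\gamma}(\log(e+|x|))^{-1}\,dx\gtrsim\varepsilon\,\frac{R^{\frac{Q}{2}-\gamma}}{\log R},
\end{equation*}
where $\gamma<\frac{Q}{2}$ is used for integrability of the weight, and the logarithm is precisely the borderline factor ensuring $u_0+u_1\in\dot H^{-\gamma}$.

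For the right-hand side I would apply Hölder with exponents $p,p'$, writing $I_R:=\int_0^\infty\!\int_{\mathbb{G}}|u|^p\phi_R$ and bounding the capacity factor $K_R:=\big(\int_0^\infty\!\int_{\mathbb{G}}|\partial_t^2\phi_R-\partial_t\phi_R+\mathcal{R}\phi_R|^{p'}\phi_R^{1-p'}\big)^{1/p'}$. Choosing $\ell>p'$ keeps the quotient integrable, and the dominant terms ($\partial_t\phi_R$ and $\mathcal{R}\phi_R$, both $O(R^{-\nu})$) live on a set of measure $\sim R^{Q+\nu}$, so $K_R\lesssim R^{-\nu+(Q+\nu)/p'}$. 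Absorbing $I_R$ into the left-hand side via Young's inequality then gives
\begin{equation*}
\varepsilon\,\frac{R^{\frac{Q}{2}-\gamma}}{\log R}\lesssim K_R^{p'}\lesssim R^{(Q+\nu)-\nu p'}.
\end{equation*}
A short computation shows that $\frac{Q}{2}-\gamma>(Q+\nu)-\nu p'$ is equivalent to $p'>\frac{Q+2\nu+2\gamma}{2\nu}$, i.e. to $p<1+\frac{2\nu}{Q+2\gamma}=p_{\mathrm{Crit}}(Q,\gamma,\nu)$. Hence in the subcritical range the left side outgrows the right as $R\to\infty$ (the logarithm being negligible against the strict gap in exponents), which contradicts the assumed global existence; therefore no global weak solution can exist.

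The genuinely delicate point I expect is the capacity estimate for a \emph{higher-order} Rockland operator. Since $\mathcal{R}$ has order $\nu=2\nu_0$ rather than $2$, the Leibniz/Fa\`a di Bruno expansion of $\mathcal{R}(\Psi^{\ell})$ produces many mixed monomials in which up to $\nu$ derivatives (in the non-commuting vector fields $X_j$) fall on $\Psi$; I must choose $\ell$ large enough and verify that every such term is pointwise dominated by $\Psi^{\ell-\nu}$ times bounded factors, so that after division by $\phi_R^{1/p}$ the integrand is integrable and carries the clean scaling $R^{-\nu}$. Making this bound uniform in $R$ and compatible with the graded dilation structure is the technical heart of the argument; by comparison the data estimate and the final exponent arithmetic are routine.
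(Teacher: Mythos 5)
Your proposal is correct and follows essentially the same route as the paper: the rescaled test function (nonlinear capacity) method with parabolic scaling $t\sim R^{\nu}$, the lower bound $\gtrsim \varepsilon R^{\frac{Q}{2}-\gamma}(\log R)^{-1}$ on the data integral from the pointwise assumption, absorption of the nonlinear term by Young's inequality, and the identical exponent comparison $\frac{Q}{2}-\gamma$ versus $Q+\nu-\nu p'$. The only cosmetic difference is that you raise the spatial cutoff to a large power $\ell$ to justify the capacity integrals for the higher-order operator, whereas the paper takes $\varphi_R(t,x)=\Phi(|x|/R)\Phi(t/R^{\nu})$ and cites the estimates from Yang's work on the Rockland heat equation; your flagged ``delicate point'' is precisely the content of that citation.
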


\begin{table}[h] \label{tabe}
\setlength{\tabcolsep}{12pt}
\renewcommand{\arraystretch}{2}
\begin{tabular}{|c|c|c|c|c|}
\hline
\cellcolor{gray!25} $Q$ 
& \cellcolor{gray!25} $\nu$ 
& \cellcolor{magenta!70} Global Existence 
&\cellcolor{red!65}  Blow-up\\
\hline
1, 2   &  $\geq 2$ &     $ 
1+\frac{2\nu}{Q+2 \gamma}<p    
\leq   \frac{Q}{(Q-2s)_+} 
$    &  $1<p<1+\frac{2\nu}{Q+2 \gamma}$ \\ 
\hline
3 &  2 &  $ 
\begin{array}{l}
	1+\frac{4}{3+ 2\gamma}<p \leq  \frac{Q}{Q- 2s} ~\quad \text { if } ~0<\gamma \leq \tilde{\gamma}  \\ 
	1+\frac{ 2\gamma}{Q} \leq  p \leq \frac{Q}{Q- 2s} ~\quad\quad \text { if }  ~\tilde{\gamma} <\gamma<\frac{Q}{2} .
\end{array}
$  &  $  1<p<1+\frac{4}{3+2 \gamma}$   \\ 
\hline
3    &  4&  $ 
1+\frac{8}{3+2 \gamma}<p    
\leq   \frac{Q}{(Q-2s)} 
$    &    $1<p<1+\frac{8}{3+2 \gamma}$ \\
\hline
4, 5, 6&   2  &  $ 
\begin{array}{l} \vspace{0.3cm}
	1+\frac{2\nu}{Q+ 2\gamma}<p \leq  \frac{Q}{Q- 2s} ~\quad \text { if } ~0<\gamma \leq \tilde{\gamma}  \\ 
	1+\frac{ 2\gamma}{Q} \leq  p \leq \frac{Q}{Q- 2s} ~\quad\quad \text { if }  ~\tilde{\gamma} <\gamma<\frac{Q}{2} .
\end{array}
$  &  $1<p<1+\frac{2\nu}{Q+2 \gamma}$ \\
\hline
\end{tabular}
\vspace{10pt}
\caption{Ranges of  $p$ for global-in-time
existence and blow-up of weak solutions for a pair $(Q, \nu).$}
\vspace{-15pt}
\label{Table3}
\end{table}

Let $s \in(0,1]$ and $\gamma \in\left(0, \frac{Q}{2}\right)$. To provide a rough idea about the critical exponent for a pair of $(Q, \nu)$, from Theorem \ref{well-posed} and Theorem \ref{blow-up}, the ranges of the exponent $p$ for global-in-time existence and blow-up of weak solution to the Cauchy problem (\ref{eq0010}) are discussed in Table \ref{tabe}.

In particular, when $s=1$ and $\nu=2$,  for a complete analysis of the critical exponent which depends on the parameter $\gamma$ and $Q$, we can describe it by the $(\gamma, p) $ plane in Figure \ref{imgg2}.	In  Figure \ref{imgg2},  with an  increase of the homogeneous dimension $Q$ of $\mathbb{G}$, the curve $p=p_{\text {Crit }}(Q, \gamma, 2)$ and the segment $p=1+\frac{2 \gamma}{Q}$ will move following the direction of   $\nearrow$ and $\nwarrow$ lines arrows, respectively.

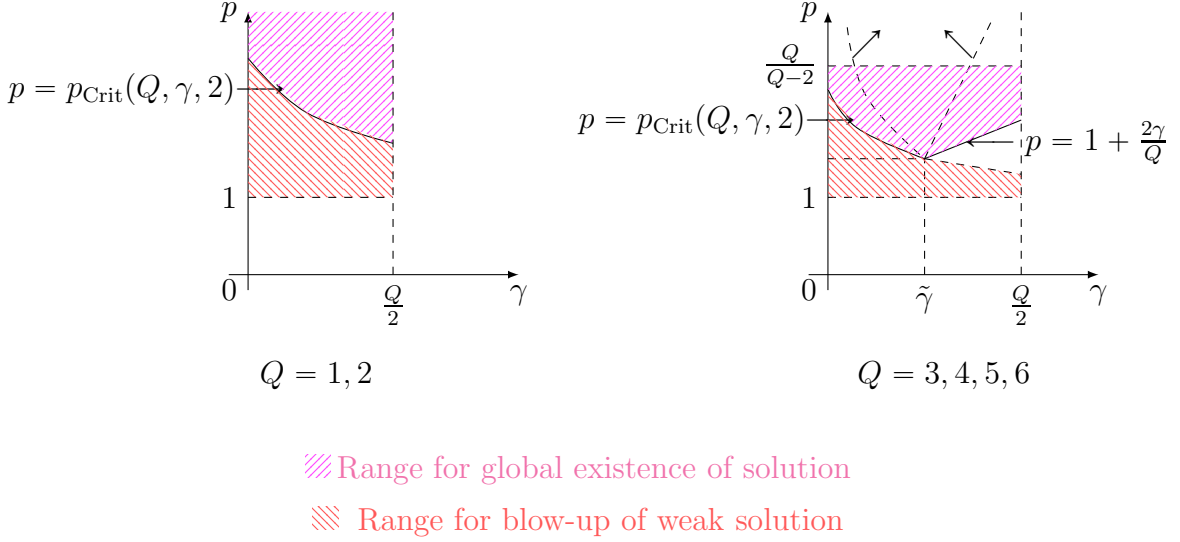
\begin{figure}[t]
\centering
\begin{tikzpicture}[>=latex,xscale=1.55,yscale=1.25,scale=0.82]
	\draw[->] (-0.2,0) -- (2.8,0) node[below] {$\gamma$};
	\draw[->] (0,-0.2) -- (0,3.4) node[left] {$p$};
	\node[right, color=black] at (-2.7,2.4) {{ $p=p_{\mathrm{Crit}}(Q,\gamma, 2)$$\longrightarrow$}};
	\node[left] at (0,-0.2) {{$0$}};
	\draw[color=black] plot[smooth, tension=.7] coordinates {(0,2.8) (0.6,2.1) (1.5,1.7)};
	\fill [pattern=north west lines, pattern color=red!65]  (0,2.8) -- (0.6, 2.1) -- (1.5,1.7) -- (1.5, 1) -- (0,1);
	\fill [pattern=north east lines, pattern color=magenta!70]  (0,2.8) -- (0.6, 2.1) -- (1.5,1.7) -- (1.5,3.4) -- (0, 3.4);
	\node[below] at (1.5,0) {{$\frac{Q}{2}$}};
	\node[left] at (0,1) {{$1$}};
	\draw[dashed, color=black]  (0, 1)--(1.5, 1);
	\draw[dashed, color=black]  (1.5, 0)--(1.5, 3.4);
	\draw[->] (5.8,0) -- (8.8,0) node[below] {$\gamma$};
	\draw[->] (6,-0.2) -- (6,3.4) node[left] {$p$};
	\node[left] at (6,-0.2) {{$0$}};
	\node[left] at (6,2.7) {{$\frac{Q}{Q-2}$}};
	\node[below] at (7,0) {{${\tilde{\gamma}}$}};
	\node[below] at (8,0) {{$\frac{Q}{2}$}};
	\node[left] at (6,1) {{$1$}};
	\node[left, color=black] at (9.64,1.7) {{$\longleftarrow$ $p=1+\frac{2\gamma}{Q}$}};
	\node[right, color=black] at (3.2,2) {{ $p=p_{\mathrm{Crit}}(Q,\gamma,2)$$\longrightarrow$}};
	\draw[dashed, color=black]  (6, 1)--(8, 1);
	\draw[dashed, color=black]  (8, 0)--(8, 3.4);
	\draw[dashed, color=black]  (6, 2.7)--(8, 2.7);
	\draw[dashed, color=black]  (7, 0)--(7, 1.5);
	\draw[dashed, color=black] (6,1.5)--(7,1.5);
	\draw[color=black] plot[smooth, tension=.7] coordinates {(6,2.4) (6.3,1.9) (7,1.5)};
	\draw[dashed, color=black] plot[smooth, tension=.7] coordinates {(6.2,3.2) (6.4,2.3) (7,1.5)};
	\node[below] at (6.4,3.3) {{$\nearrow$}};
	\node[below] at (7.35,3.3) {{$\nwarrow$}};
	\draw[color=black] plot[smooth, tension=.7] coordinates { (7,1.5) (8,2)};
	\draw[dashed, color=black] plot[smooth, tension=.7] coordinates { (7,1.5) (7.7,3.3)};
	\draw[dashed, color=black] plot[smooth, tension=.7] coordinates { (7,1.5) (7.5,1.4) (8,1.3)};
	\fill [pattern=north west lines, pattern color=red!65]  (6,2.4) -- (6.3,1.9) -- (7,1.5) -- (7.5,1.4) --(8,1.3) -- (8,1) -- (6, 1);
	\fill [pattern=north east lines, pattern color=magenta!70]  (6,2.4) -- (6.3,1.9) -- (7,1.5) -- (8,2) -- (8,2.7) -- (6,2.7);
	\node[left] at (1.4,-1.3) {{  $Q=1,2$}};
	\node[left] at (8.2,-1.3) {{  $Q=3,4,5,6$}};
\end{tikzpicture}
\begin{center}

	\begin{tikzpicture}
	\end{tikzpicture}\\
	\begin{tikzpicture}[>=latex,xscale=1.35,yscale=1.25,scale=0.82]	\node[left] at (4.81,4.1) {{\color{magenta!70}  Range for global existence of solution}};\fill [pattern=north east lines, pattern color=magenta!70]  (-1.51,4) -- (-1.51, 4.3) -- (-1.81,4.3) -- (-1.81, 4);	\end{tikzpicture}\\
	\begin{tikzpicture}[>=latex,xscale=1.35,yscale=1.25,scale=0.82]	\node[left] at (4.8,4.1) {{\color{red!65}  Range for blow-up of weak solution}}; \fill [pattern=north west lines, pattern color=red!65]   (-1.34,4) -- (-1.34, 4.3) -- (-1.64,4.3) -- (-1.64, 4); \end{tikzpicture}
\end{center}
\caption{Description of the critical exponent in the $(\gamma, p)$ plane for $\nu=2$}
\label{imgg2}\end{figure}



From  Theorems \ref{well-posed} and Theorem \ref{blow-up}, we can conclude that the critical exponent for the semilinear damped wave equation (\ref{eq0010})   is 	$p_{\text{Crit}}(Q, \gamma, \nu)=1+\frac{2\nu}{Q+2\gamma}$  when initial data are additionally taken from the negative order Sobolev space  $ \dot{H}^{-\gamma}, \gamma>0$. It gives us a new way to look at the critical exponent in Sobolev space with a negative order for the semilinear damped wave equation on graded Lie groups.   

Finally, we derive sharp lifespan estimates for weak solutions to the semilinear Cauchy problem (\ref{eq0010}). We define  the lifespan  $T_\varepsilon$ as the maximal existence time for solution of (\ref{eq0010}), i.e.,
\begin{align}\nonumber \label{eq42}
T_\varepsilon :=\sup \Big\{ T>0~&:~ \text{there exists a unique local-in-time solution to the Cauchy} \\& \text{  problem (\ref{eq0010}) on $[0, T)$   with a fixed parameter}~\varepsilon>0\Big\}.
\end{align}
We denote $T_{w,\varepsilon}$ and $T_{m,\varepsilon}$ as the lifespan for a weak and mild solution to the Cauchy problem (\ref{eq0010}), respectively. Then, for $\gamma \in (0, \tilde{\gamma})$ and $1<p<p_{\text {Crit}}(Q, \gamma, \nu)$, we prove   the following lower bound and upper bound estimates for the lifespan of the weak solution and the mild solution.

\begin{itemize}
\item The non-trivial local-in-time weak solution blows up in finite time, and the  lifespan $T_{w, \varepsilon}$ of a weak solution to   (\ref{eq0010}) satisfies the following upper bound estimate:
$$ 	T_{w,\varepsilon}  \lesssim  \varepsilon^{-\left(\frac{1}{p-1}-\left(\frac{Q}{2\nu}+\frac{\gamma}{\nu}\right)\right)^{-1}} .$$

\item Additionally, if the exponent $p$ satisfies
$
1+\frac{2 \gamma}{Q} \leq p \leq \frac{Q}{Q-2 }
$ and  $\left(u_0, u_1\right) \in \mathcal{A}^1$,   then the  lifespan $T_{m, \varepsilon}$ of a mild solution   to   (\ref{eq0010}) satisfies the following lower bound estimate:
$$
T_{m,\varepsilon}  \gtrsim  \varepsilon^{-\left(\frac{1}{p-1}-\left(\frac{Q}{2\nu}+\frac{\gamma}{\nu}\right)\right)^{-1}} .
$$

\end{itemize}
Since a solution $u$ in Theorem \ref{well-posed} is a mild solution to \eqref{eq0010},  this mild solution is also a weak solution to (\ref{eq0010}) with $ T_{m,\varepsilon}\leq  T_\varepsilon$.  Consequently, if  the initial data    is  from  negative order homogeneous Sobolev space $\dot  {H} ^{-\gamma}(\mathbb{G})$  and the exponent   $p$ satisfies  $1+\frac{2 \gamma}{Q} \leq p \leq \frac{Q}{Q-2}$,  then  we can claim the sharp estimate for the lifespan $T_\varepsilon$ as  
$$
T_\varepsilon\left\{\begin{array}{ll}
=\infty & \text { if } p>p_{\text {Crit }}(Q, \gamma, \nu), \\
\simeq   C \varepsilon^{-\left(\frac{1}{p-1}-\left(\frac{Q}{2\nu}+\frac{\gamma}{\nu}\right)\right)^{-1}}  & \text { if } p<p_{\text {Crit }}(Q, \gamma, \nu),
\end{array}\right.
$$
for some   $\gamma \in\left(0,\frac{Q}{2}\right)$, where  the positive constant $C$ is independent of $\varepsilon.$ 



Apart from the introduction,  the organization of the paper is as follows.
 In Section \ref{sec2}, we recall some basics of the Fourier analysis on the graded Lie group $\mathbb{G}$ to make the paper self-contained.
 In Section \ref{sec3}, we will clarify our strategy to prove the main results of this paper.
 Section \ref{sec4} is devoted to proving the main results of this article. In particular, using the Fourier analysis on the graded Lie group $\mathbb{G}$,  in the framework of Sobolev spaces of negative order, we prove that $p_{\text{Crit}}(Q, \gamma, \nu)=1+\frac{2\nu}{Q+2\gamma}$ is the new critical exponent for some  $\gamma\in (0, \frac{Q}{2})$ by proving a global-in-time existence of small data Sobolev solutions for $p>p_{\text{Crit}}(Q, \gamma, \nu)$ and a finite time blow-up of weak solutions for $1<p<p_{\text{Crit}}(Q, \gamma, \nu)$ to (\ref{eq0010}). We also derive a sharp estimate for the lifespan in the sub-critical case: $1<p<p_{\text{Crit}}(Q, \gamma, \nu)$.


%
%

\section{Preliminaries: Analysis on    graded Lie  groups} \label{sec2}
For more details on the material of this section, we refer to \cite{Folland, Fischer, RF17}.
\subsection{Graded Lie groups} A graded Lie group $\mathbb{G}$  is a connected and simply connected  Lie group whose Lie algebra $\mathfrak{g}$ is {\it graded}, that is, $\mathfrak{g}$ admits a vector space decomposition  $\mathfrak{g}= \bigoplus_{i=1}^\infty \mathfrak{g}_i,$ for which all but finitely many $\mathfrak{g}_i$'s are $\{0\}$ and satisfy the identities $[\mathfrak{g}_i, \mathfrak{g}_j] \subset \mathfrak{g}_{i+j}$ for all $i, j \in \mathbb{N}.$ Such a decomposition of a Lie algebra $\mathfrak{g}$ is called {\it gradation} of $\mathfrak{g}.$ A graded Lie algebra is {\it stratifiable} if there exists a gradation of $\mathfrak{g}$ such that $[\mathfrak{g}_1, \mathfrak{g}_i ]= \mathfrak{g}_{i+1}$ for all $i \in \mathbb{N}.$  A Lie group $\mathbb{G}$ whose Lie algebra $\g$ is stratifiable is called a {stratified Lie group}.  This immediately shows that every stratified Lie group is graded.  The Heisenberg group, more generally, $H$-type groups, Engel groups and Cartan groups are examples of graded Lie groups.

We define a family of dilations $D_r^{\mathfrak{g}},\,r>0,$ on a Lie algebra $\mathfrak{g}\cong \mathbb{R}^n $ as the vector space automorphisms of $\mathfrak{g}$ of the form $D_r^{\mathfrak{g}}:=\exp(\ln(r)A)$ for some diagonalisable matrix $A \sim \text{diag}[\nu_1, \nu_2, \ldots, \nu_n]$ with positive eigenvalues $0<\nu_1 \leq  \nu_2\leq \ldots \leq \nu_n$ on $\mathfrak{g}$ such that   $$D_r^{\mathfrak{g}}[X, Y]=[D_r^{\mathfrak{g}}X, D_r^{\mathfrak{g}}Y],$$
for all $X, Y \in \mathfrak{g}$ and $r>0.$ The  positive eigenvalues $0<\nu_1 \leq  \nu_2\leq \ldots \leq \nu_n$ of $A$ are called the dilations' weights of $\g.$  A Lie algebra $\g \cong \mathbb{R}^n$ is called {\it homogeneous} if there exists a family of dilations $D_r^{\mathfrak{g}},\,r>0,$ on $\g.$ It is well known that the existence of a family of dilations on $\g$ implies that $\g$ is a nilpotent Lie algebra. The Lie group $\mathbb{G}:=\exp{\g},$ which is connected and simply connected, is called  {\it homogeneous} if $\g$ is a homogeneous Lie algebra.  The family of dilations $\{D_r^{\mathfrak{g}}: r>0\}$ on Lie algebra $\mathfrak{g}$ induces a family of dilations $\{D_r: r>0\}$ on the group $\mathbb{G}$ by $D_r:=\exp \circ D_r^{\mathfrak{g}} \circ \exp^{-1},\,\, r>0.$

It is easy to note that for a given  graded Lie algebra $\mathfrak{g}= \bigoplus_{i=1}^\infty \mathfrak{g}_i$, the sequence of subspaces $\mathfrak{I}_k:=\bigoplus_{i=k}^\infty \mathfrak{g}_i$ forms a finite nested sequence of ideals in $\g.$ Thus, using these ideals $\mathfrak{I}_k,$ any basis $\{X_1, X_2, \ldots, X_n\}$ given as the union of the bases $\{X_1, X_2, \ldots, X_{n_i}\}$ is necessarily a strong Malcev basis of $\g.$ Such a basis of a graded Lie algebra $\g$ gives rise to a  family of dilations $D_r^{\g}, \,r>0,$ on $\g$ using the matrix given by  $A X_j=i X_j$ for every $X_j \in \g_i,$ that is, $D_r^{\g} X_j=r^{i}X_j.$ 

We may identify $\mathbb{G}$ with $\mathbb{R}^{n}$ with $n=\text{dim}~\mathfrak{g}$ via the exponential map $\exp: \mathfrak{g} \rightarrow G$ given by $x=\exp(x_1X_1+x_2X_2+\cdots+x_nX_n) \in \mathbb{G}$ having a basis of $\g.$ Using this identification, we can naturally identify a function on $\mathbb{G}$ as a function on $\mathbb{R}^n.$ These exponential coordinates allow us to represent the action of $D_r$ on $\mathbb{G}$ explicitly:
$$D_r(x)=rx:=\exp(r^{\nu_1}X_1+r^{\nu_2}X_2+\cdots+r^{\nu_n}X_n)=(r^{\nu_1}x_1, r^{\nu_2}x_2, \ldots, r^{\nu_n}x_n),$$
for $x:=(x_1,x_2,\ldots,x_n) \in \mathbb{G},\,\,r>0.$ This notion of dilation on $\mathbb{G}$ is crucial to define the notion of homogeneity for functions, measures, and operators. For examples, the bi-invariant Haar measure $dx$ on $\mathbb{G},$ which is just a Lebesgue measure on $\mathbb{R}^n,$ is $Q$-homogeneous in the sense that 
$$d(D_r(x))=r^Q dx,$$ where $Q:= \sum_{i=1}^\infty i \dim
\g_i=\nu_1+\nu_2+\ldots+\nu_n$ is called the homogeneous dimension of $\mathbb{G}.$ It is customary to jointly rescale weights so that $\nu_1=1.$ This also shows that $Q \geq n.$
Identifying the elements of $\g$ with the left-invariant vector fields, each $X_j$ is a homogeneous differential operator of degree $\nu_j.$ For every multi-index $\alpha \in \mathbb{N}^n_0$, we set $X^\alpha= X_1^{\alpha_1} X_2^{\alpha_2}\ldots X_r^{\alpha_n}$ in the universal enveloping algebra $\mathfrak{U}(\mathfrak{g})$ of the Lie algebra $\mathfrak{g}.$ Then $X^\alpha$ is of homogeneous degree $[\alpha]:=\alpha_1 \nu_1+\alpha_2 \nu_2+\ldots+\alpha_n \nu_n.$

\subsection{Homogeneous quasi-norms on homogeneous  groups and polar decomposition} 
A {\it homogeneous quasi-norm} on a homogeneous group $\G$ is a continuous function $|\cdot|:\G \rightarrow [0, \infty)$ such that it satisfies the following properties:
\begin{itemize}
\item $|x|=0$ if and only if $x=e_\G.$
\item  $|x^{-1}|=|x|$
\item $|D_rx|=r|x|$ for $r>0.$
\end{itemize}
There always exists a homogeneous quasi-norm in any homogeneous group $\mathbb{G}$ (\cite{Folland}). One can show the existence of a homogeneous quasi-norm on $\mathbb{G}$, which is $C^\infty$-smooth on $\mathbb{G} \backslash \{e_\mathbb{G}\}.$ Every homogeneous quasi-norm satisfies the following triangle inequality with the constant $C\geq 1:$
$$|xy| \leq C (|x|+|y|)\quad \forall x, y \in \mathbb{G}.$$
In fact, it is always possible to choose a  homogeneous quasi-norm on any homogeneous group that satisfies the triangle inequality with constant $C=1.$ Any two homogeneous quasi-norms on $\mathbb{G}$ are equivalent.

Similar to the Euclidean space, there is a notion of polar decomposition on a homogeneous group $\mathbb{G}$ for the homogenous quasi-norm $|\cdot|.$  Let 
$$\mathfrak{S}:={x \in G:\quad |x|=1}$$ be the unite sphere with respect to the homogenous quasi-norm $|\cdot|.$ Then there exists a unique Radon measure $\sigma$ on $\mathfrak{S}$ such that for all $f \in L^1(\G),$ we have 
\begin{equation} \label{polardeco}
\int_{G} f(x)dx= \int_0^\infty \int_{\mathfrak{S}} f(ry) r^{Q-1} d\sigma(y)\, dr.
\end{equation}

\subsection{Positive Rockland operators on graded Lie groups} \label{Rocksec}

Now it is time to introduce the main object of the discussion, namely, Rockland operators. To define them, we first need to fix some notation for the continuous unitary representations of the group. Let $(\pi, \mathcal{H}_\pi)$ be a continuous unitary representation of a graded Lie group $\G.$ Denote the set of equivalence classes of all strongly continuous unitary representations of $\G$ by $\widehat{\G}.$ Here, the Hilbert space $\mathcal{H}_\pi$ denotes the representation space of $\pi.$ We also denote the space of all smooth vectors of $\pi$ by $\mathcal{H}_\pi^\infty,$ which is a subspace of $\mathcal{H}_\pi.$
The infinitesimal representation of the Lie algebra $\mathfrak{g}$ and its extension to the universal enveloping Lie algebra $\mathfrak{U}(\mathfrak{g})$ will also be denoted by $\pi.$ We note here that the space of left-invariant vector fields and the algebra of left-invariant differential operators on $\G$ can be identified with $\mathfrak{g}$ and $\mathfrak{U}(\mathfrak{g}),$ respectively. For a left-invariant differential operator $T$, let us denote by $\pi(T),$ the infinitesimal representation $d\pi(T)$ associated with $\pi \in \widehat{\G}.$ 

A left-invariant differential operator $\mathcal{R}$ on a homogeneous group $\G$ called the {\it Rockland operator} if it is homogeneous of positive degree $\nu,$ that is, 
$$\mathcal{R}(f \circ D_r)=r^{\nu} (\mathcal{R}f) \circ D_r, \quad r>0,\,\, f \in C^\infty(\G), $$ and 
the operator $\pi(\mathcal{R})$ is injective on $\mathcal{H}^\infty_\pi$ for every nontrivial representation $\pi \in \widehat{\G},$ that is, 
\begin{equation}\label{Rock}
\forall v \in \mathcal{H}^\infty_\pi \quad \pi(\mathcal{R})v=0 \implies v=0.
\end{equation}
The condition \eqref{Rock} is known as the Rockland condition. The Rockland condition for $\mathcal{R}$ is equivalent to the {\it hypoellipticity} of $\mathcal{R},$ that is locally, $\mathcal{R}f \in C^\infty(\G) \implies f \in C^\infty(\G).$ This equivalence is commonly known as the {\it Rockland conjecture} (see \cite{Rock}) and was resolved in \cite{HN} (see also \cite{Miller}).
A Rockland operator is {\it positive} when 
$$\int_{\G} \mathcal{R}f(x) \overline{f(x)} dx \geq 0,\quad \forall f \in \mathcal{S}(\G).$$ 

It is a celebrated result of ter Elst and Robinson \cite{terRob97} which says that if there is a Rockland operator on a homogeneous Lie group $\G$, then the group $\G$ must be graded.
On the other hand, an infinite family of positive Rockland operators can be created for any graded Lie group $\G.$ Indeed, the operators given by 
\begin{equation}
\mathcal{R}:= \sum_{j=1}^n(-1)^{\frac{\nu_0}{\nu_j}} a_j X_j^{2 \frac{\nu_0}{\nu_j}}, \quad \text{with}\,\, a_1, a_2, \ldots, a_n>0
\end{equation}
for any strong Malcev basis $\{X_1, X_2, \ldots, X_n\}$ of the Lie algebra $\g$ and any common multiple $\nu_0$ of $\nu_1, \nu_2, \ldots, \nu_n,$ are positive Rockland operators of homogeneous degree $\nu=2\nu_0.$ It is easy to see that if $\mathcal{R}$ is a positive Rockland operator, then its powers $\mathcal{R}^k,\, k \in \mathbb{N},$ and complex conjugate $\overline{\mathcal{R}}$ are also Rockland operators.  

{\it Throughout this paper, we will always assume that a Rockland operator is always positive and essentially self-adjoint on $L^2(\G).$}

In the stratified case, assume that $\{X_1, X_2, \ldots, X_{n_1}\}$ is a basis of the first stratum $\g_1$ of the stratified Lie algebra. Then any left-invariant {\it sub-Laplacian} (with analyst sign convention) on $\G$
\begin{align}\label{stratified}
\mathcal{L}_\G:=-(X_1^2+X_2^2+\cdots+X_{n_1}^2)
\end{align}is a positive Rockland operator of the homogeneous degree $\nu=2.$ 
On $\g =\mathbb{R}^n,$ with the trivial stratification and canonical family of dilation $D_r(x)=rx, \,\,r>0,$ on the group $(\mathbb{R}^n, +)$, the Laplace operator operator $-\Delta_x:=-\sum_{i=1}^n \partial_{x_i}^2$ is a particular case of a positive sub-Laplacian. By equipping the group $\G=(\mathbb{R}^n, +)$ with another  isotropic or anisotropic family of dilations with the dilations' weights $\mathbb{R}^n \ni (\nu_1, \nu_2,\ldots, \nu_n) \neq (1, 1, \ldots, 1)$  determined by the canonical basis of $\g =\mathbb{R}^n,$ the operator 
\begin{equation}
\mathcal{R}:= \sum_{j=1}^n(-1)^{\frac{\nu_0}{\nu_j}} a_j \partial_j^{2 \frac{\nu_0}{\nu_j}}, \quad \text{with}\,\, a_1, a_2, \ldots, a_n>0
\end{equation}
is a positive Rockland operator of Homogeneous degree $\nu:=2\nu_0$ on $\G=(\mathbb{R}^n, +)$ provided  $\nu_0$ is any common multiple of $\nu_1, \nu_2, \ldots, \nu_n.$

\subsection{Fourier transform on graded Lie groups}
 
One of the important tools to deal with PDEs on graded Lie groups is the operator-valued group Fourier transform on $\G$. The group Fourier transform $\mathcal{F}_{\G   }(f)(\pi):\mathcal{H}_\pi \rightarrow \mathcal{H}_\pi$ of $f\in \mathcal{S}(\G)\cong \mathcal{S}(\mathbb{R}^n), $  at $\pi\in\widehat{\G},$ is a linear mapping that can be represented by an infinite matrix once we choose a basis for
the Hilbert space $\mathcal{H}_\pi,$  and defined by 
\begin{equation} \label{gft}
\mathcal{F}_{\G   }(f)(\pi)=\widehat{f}(\pi):=\int_{\G}f(x)\pi(x)^*dx = \int_{\G} f(x)\pi(x^{-1})\,dx.
\end{equation}
For $f \in L^2(\G),$ the operator $\widehat{f}(\pi)$ is a Hilbert-Schmidt operator on $\mathcal{H}_{\pi}$ for each $\pi \in \widehat{G}.$ Moreover, there exists a measure $\mu$ on $\widehat{G}$ such that the following inversion formula 
$$f(x)= \int_{\widehat{\G}} \text{Tr}(\pi(x)\widehat{f}(\pi)) d\mu(\pi)$$
holds for every $f \in \mathcal{S}(\G)$ and $x \in \G.$

Additionally, the following {\it Plancherel identity} is also true for $f \in \mathcal{S}(\G):$
\begin{equation}
\int_{\G} |f(x)|^2 \,dx =\int_{\widehat{G}} \|\widehat{f}(\pi)\|_{\text{HS}(\mathcal{H}_\pi)}^2\, d\mu(\pi).
\end{equation}
Furthermore, the Fourier transform $\mathcal{F}_{\G   }$ extends uniquely to a unitary isomorphism from $L^2(\G)$ onto the space $L^2(\widehat{\G}),$ where the space $L^2(\widehat{\G})$ is defined as the direct integral of Hilbert spaces of measurable fields of operators 
$$L^2(\widehat{\G}):=\int_{\widehat{G}}^{\oplus} \text{HS}(\mathcal{H}_\pi) d\mu(x)$$
with the norm 
$$\|\tau\|_{L^2(\widehat{\G})}= \left( \int_{\widehat{\G}} \|\tau_\pi\|_{\text{HS}(\mathcal{H}_\pi)}^2\, d\mu(\pi) \right)^{\frac{1}{2}}.$$
The measure $\mu$  is called the {Plancherel measure} on $\widehat{\G}$.

 Moreover, for any $f \in L^2(\mathbb{G})$,  we have 
$$
\mathcal{F}_{\mathbb G}(\mathcal{R} f)(\pi)=\pi(\mathcal{R})\widehat{f}(\pi).
$$
The authors in \cite{spectrum}  proved that the spectrum of the operator $\pi(\mathcal{R})$ with $\pi\in \widehat{ \mathbb{G}}\backslash \{1\}$, is discrete and lies in $(0, \infty)$.  Thus  we can choose an orthonormal basis for $\mathcal{H}_\pi$  such that the infinite matrix associated
to the self-adjoint operator $\pi(\mathcal{R})$ has the  following  representation 
\begin{align}\label{matrix}
\pi(\mathcal{R})=\left(\begin{array}{cccc}
\pi_1^2 & 0 & \cdots & \cdots \\
0 & \pi_2^2 & 0 & \cdots \\
\vdots & 0 & \ddots & \\
\vdots & \vdots & & \ddots
\end{array}\right)
\end{align}
where $\pi_i, \, i=1,2,\ldots,$ are strictly positive real numbers and  $\pi\in \widehat {\mathbb{G}}\backslash \{1\}$. 

\subsection{Sobolev spaces on graded Lie groups and interpolation inequalities}
The Sobolev spaces on graded Lie groups were systematically studied by Fischer and the third author in \cite{RF17, Fischer}. 

The inhomogeneous Sobolev spaces $H ^s(\G):=H^s_{\mathcal{R}}(\G), s \in \mathbb{R}$, associated to  positive Rockland operator  $\mathcal{R}$ of homogeneous degree $\nu$, is defined  as
$$
H ^s\left(\mathbb{G}\right):=\left\{f \in \mathcal{D}^{\prime}\left(\mathbb{G}\right):(I+\mathcal{R})^{s / \nu} f \in L^2\left(\mathbb{G}\right)\right\},
$$
with the norm $$\|f\|_{H ^s\left(\mathbb{G}\right)}:=\left\|(I+\mathcal{R})^{s / \nu} f\right\|_{L^2\left(\mathbb{G}\right)}.
$$  
Similarly, we define the homogeneous Sobolev space  $ \dot{H}^{p, s}_{\mathcal{R}}(\mathbb{G}):=\dot{H}^{p, s}(\mathbb{G})$ on $\G$  as the space of all  $f\in \mathcal{D}'(\mathbb{G})$ such that 
$\mathcal{R}^{{s}/{\nu}}f\in L^p(\mathbb{G})$ with the norm
$$\|f\|_{\dot{H}^{p, s}(\mathbb{G})}:=\left\|\mathcal{R}^{s / \nu} f\right\|_{L^p\left(\mathbb{G}\right)}.
$$  
The reason for omitting the subscript $\mathcal{R}$ is that 	
these Sobolev spaces are independent of the choice of a Rockland operator $\mathcal{R}$.    

Let  $\mathbb  G$ be a graded Lie group with homogeneous dimension $Q$.	  Then we use the following inequalities developed in \cite{30,Fischer, RF17}  throughout this work.

\begin{itemize}
\item  {\bf Hardy-Littlewood-Sobolev inequality} \cite{Fischer, RF17}:   Let $s>0$ and $1<p<q<\infty$ be such that 	$$ 	\frac{s}{Q}=\frac{1}{p}-\frac{1}{q}.$$ Then   \begin{align}\label{eq177}
	\|f\|_{{L}^{q}(
		\mathbb G)}  \lesssim  \|f\|_{{\dot H} ^{p,s}(\mathbb G)}\simeq   \|\mathcal{R}^{\frac{s}{\nu}}f\|_{L^p(\mathbb G)}. 	\end{align}
\item {\bf Gagliardo-Nirenberg  inequality} \cite{30}: 
Let 
$				s\in(0,1], 1<r<\frac{Q}{s},\text { and }~  2 \leq q \leq \frac{rQ}{Q-sr} 
.$	Then  
\begin{align}\label{eq16}
	\|u\|_{L^q(\mathbb G)} \lesssim\|u\|_{{\dot H} ^{r,s}(\mathbb G)}^\theta\|u\|_{L^2(\mathbb G)}^{1-\theta},
\end{align}
for $\theta=\left(\frac{1}{2}-\frac{1}{q}\right)/{\left(\frac{s}{Q}+\frac{1}{2}-\frac{1}{r}\right)}\in[0,1]$, provided that $\frac{s}{Q}+\frac{1}{2}\neq \frac{1}{r}$. 

\end{itemize}

\section{Philosophy of our approach}\label{sec3}
In this section, we will describe our approach to prove the main results of this paper.  First, we provide the notion of mild solutions to (\ref{eq0010}) in our framework. Consider the inhomogeneous system 
\begin{align} \label{eq001000}
\begin{cases}
	u_{tt}+\mathcal{R}u +u_{t} =F(t,x), & x\in  \mathbb{G},~t>0,\\
	u(0,x)=  u_0(x),  & x\in  \mathbb{G},\\ u_t(0, x)=  u_1(x), & x\in  \mathbb{G}.
\end{cases}
\end{align}
By applying Duhamel’s principle, the solution to the above system can be written as 
$$u(t, x)=   u_{0} *  E_{0}(t, x)+  u_{1} *  E_{1}(t, x)+\int_{0}^{t}F(s, x) *  E_{1}(t-s, x) \;ds,$$
where   $*$ denotes the  group convolution product on $ \mathbb G$ with respect to the $x$ variable, and  $E_{0}$ and $E_{1}$  represent   the propagators to (\ref{eq001000}) in  the homogeneous case $F=0$   with initial data $\left(u_{0}, u_{1}\right)=\left(\delta_{0}, 0\right)$ and $\left(u_{0}, u_{1}\right)=$ $\left(0, \delta_{0}\right)$, respectively. Our main interest lies in considering power-type nonlinearities, that is, $F(t,x)=|u(t, x)|^p$, and this will always be the case throughout the paper.

A function   $u$ is  said to be a {\it mild solution} to (\ref{eq0010})  on $[0, T]$ if $u$ is a fixed point for  the    integral operator $N: u \in X_s(T) \mapsto N u(t, x) ,$ given  by 
\begin{align}\label{f2inr} 
N u(t, x):=u^{\text{lin}}(t, x) + u^{\text{non}}(t, x),
\end{align}	in the energy evolution space $X_s(T) \doteq \mathcal{C}\left([0, T], H^{s}(\mathbb{G})\right), s\in (0, 1]$, 
equipped with the norm
\begin{align} \label{Evolution norm}
\|u\|_{X_s(T)}&:=\sup\limits_{t\in[0,T]}\left ( (1+t)^{\frac{\gamma}{\nu}} \|u(t,\cdot)\|_{L^2}+(1+t)^{\frac{s+\gamma}{\nu}}\|u(t,\cdot)\|_{ \dot H^s}\right )
\end{align}
with $\gamma>0,$
where   $$u^{\text{lin}}(t, x)=   u_{0} *  E_{0}(t, x)+  u_{1} *  E_{1}(t, x)$$  
 is the solution to the corresponding linear Cauchy problem (\ref{eq0010}), and 
$$ u^{\text{non}}(t, x)= \int_{0}^{t} F(s, x) *  E_{1}(t-s, x)  \;ds.$$
We aim to prove the global-in-time existence and uniqueness of small data Sobolev solutions of low regularity to the semilinear damped wave equation (\ref{eq0010}) with
the help of the Banach's  fixed point theorem argument.  We find a  unique fixed point (say) $u^*$ of the operator $N$, that means, $u^*=N u^* \in X_s(T)$ for all positive $T$. More precisely, to find such a unique fixed point,   we will establish two crucial inequalities of the form
\begin{align}\label{Banach1}
\|N u\|_{X_s(T)} & \lesssim  \left\|\left(u_0, u_1\right)\right\|_{\mathcal{A}^s}+\|u\|_{X_s(T)}^p,  \end{align}
and \begin{align}\label{Banach2}
\|N u-N v\|_{X_s(T)} & \lesssim\|u-v\|_{X_s(T)}\left[  \|u\|_{X_s(T)}^{p-1}+\|v\|_{X_s(T)}^{p-1}\right],
\end{align}
for any $u, v \in X_s(T)$ with initial data space $\mathcal{A}^s:=\ (H^s \cap \dot{H}^{-\gamma}\ ) \times\ (L^2 \cap \dot{H}^{-\gamma}\ )$. 
We will consider sufficiently small $\left\|\left(u_0, u_1\right)\right\|_{\mathcal{A}^s}<\varepsilon $ so that combining (\ref{Banach1}) with (\ref{Banach2}) we can apply  Banach's fixed point theorem to ensure that there exists a global-in-time small data unique Sobolev solution $u^*=Nu^* \in X_s(T)$ for all  $T>0$, which also gives the solution to (\ref{eq0010}). Here we want to note that the treatment of the power-type nonlinear term in $\dot {H}^{-\gamma}$ is based on the applications of the Hardy-Littlewood-Sobolev inequality and the  Gagliardo-Nirenberg inequality on the graded Lie group $\mathbb G$.

Before considering the semilinear Cauchy problem, first, we will determine the $\dot {H}^s$-norm estimate for the solution to the linear problem (\ref{Linear-system}) via the group Fourier transform on the graded Lie group $\mathbb G$ concerning the spatial variable.


On the other hand,   to prove a  blow-up (in-time) result in the subcritical case to the Cauchy problem  (\ref{eq0010}) for certain ranges of $p$ regardless of the size of the initial data, we need to introduce a suitable notion of a weak solution to the   Cauchy problem (\ref{eq0010}).

For any $T>0$, 	 a  weak solution of the Cauchy problem (\ref{eq0010}) in $[0, T) \times \mathbb G$ is a function $u \in L_{\text {loc }}^p\left([0, T) \times \mathbb G\right)$ that   satisfies  the following  integral relation:
\begin{align}\label{2999}\nonumber
&	\int_0^T \int_{\mathbb G} u(t, x)\left(\partial_t^2 \phi(t, x)-\mathcal{R} \phi(t, x)-\partial_t \phi(t, x)\right) d x \;d t -\varepsilon\int_{\mathbb{H}^n} u_0(x) \phi(0, x) d x\\&-\varepsilon\int_{\mathbb{H}^n} u_1(x) \phi(0, x) d x+\varepsilon\int_{\mathbb G} u_0(x) \partial_t \phi(0, x) d x =	\int_0^T \int_{\mathbb G}|u(t, x)|^p \phi(t, x) d x \;d t,
\end{align}
for any $\phi \in \mathcal{C}_{0}^{\infty}([0, T) \times \mathbb G)$ and   any $t \in(0, T)$.
If $T=\infty$, we call $u$ to be a global-in-time weak solution to (\ref{eq0010}), otherwise   $u$ is said to be a local-in-time weak solution to (\ref{eq0010}). 
We obtain blow-up of weak solutions even for small
data in the case  $1 < p < p_{\text{Crit}}(Q, \gamma, \nu)$ by employing the test function method. 




\section{Main results}\label{sec4}
This section is devoted to present the proofs of the main results of this paper. In particular, in the setting of negative order Sobolev spaces, we show that  $p_{\text{Crit}}(Q, \gamma, \nu)=1+\frac{2\nu}{Q+2\gamma}$ is the new critical exponent for some  $\gamma\in (0, \frac{Q}{2})$  to the Cauchy problem (\ref{eq0010}) by proving global-in-time existence of small data Sobolev solutions of lower regularity for $p>p_{\text{Crit}}(Q, \gamma, \nu) $  and a finite time blow-up of weak solutions for $1<p<p_{\text{Crit}}(Q, \gamma, \nu)$. We begin this section with the following  $ \dot {H}^s$- estimates for solutions to the homogeneous Cauchy problem (\ref{Linear-system}), which will be crucial to proving the global-in-time existence of small data Sobolev solutions. 
 
\begin{proof}[Proof of Theorem \ref{Linear}] 
Applying the group Fourier transform \eqref{gft} on $\mathbb{G}$ to the linear system   (\ref{Linear-system})  with respect to $x$,  for all $\pi \in \widehat{\mathbb{G}}$,   we get   a Cauchy problem related to a parameter-dependent functional differential equation for $ \widehat{u}(t, \pi)$ as follows:
\begin{align}\label{eq6661}
	\begin{cases}
		\partial^2_t\widehat{u}(t, \pi)+\pi(\mathcal{R}) \widehat{u}(t, \pi) +\partial_t \widehat{u}(t, \pi)=0,&  \pi \in\widehat{\mathbb{G}},~t>0,\\ \widehat{u}(0, \pi)=\widehat{u}_0( \pi), & \pi \in\widehat{\mathbb{G}},\\ \partial_t\widehat{u}(0, \pi)=\widehat{u}_1( \pi), & \pi \in\widehat{\mathbb{G}},
	\end{cases} 
\end{align}
where  $\pi(\mathcal{R})$ is the symbol of the Rockland operator $\mathcal{R}$ on ${\mathbb{G}}$. For $ m, k \in \mathbb{N}$,   we introduce the notation
\begin{align}\label{basis}
	\widehat{u}(t,  \pi)_{m, k} \doteq\left(\widehat{u}(t,  \pi) e_k, e_{m}\right)_{\mathcal{H}_\pi},
\end{align}
where $ \{e_m\}_{m\in \mathbb{N}}$ is  the same orthonormal basis in the representation space $\mathcal{H}_\pi$ that gives   us  (\ref{matrix}). 

Then $\widehat{u}(t,  \pi)_{m, k}$ solves the following  infinite system of  ordinary differential equation with respect to $t$    variable 
\begin{align}\label{eqq7}
	\begin{cases}
		\partial^2_t\widehat{u}(t, \pi)_{m, k}+	\partial_t\widehat{u}(t, \pi)_{m, k}+ \beta_{m, \pi}^{2 }  \widehat{u}(t, \pi)_{m, k}= 0,&  \pi \in\widehat{\mathbb{G}},~t>0,\\ \widehat{u}(0, \pi)_{m, k}=\widehat{u}_0( \pi)_{m, k}, & \pi \in\widehat{\mathbb{G}},\\ \partial_t\widehat{u}(0, \pi)_{m, k}=\widehat{u}_1( \pi)_{m, k}, & \pi \in\widehat{\mathbb{G}},
	\end{cases}
\end{align}
where  we denote $\beta_{m, \pi}^{2 } = \pi_m^2.$ 	 The characteristic equation  of the above system is given by
\[\lambda^2+	\lambda+\beta_{m, \pi}^{2 } =0.\]
Consequently,  the characteristic roots    are    given by  $$\lambda_1=\frac{-1- \sqrt{1-4\beta_{m, \pi}^{2 }}}{2}  \quad \text{ and}\quad \lambda_2=\frac{-1+ \sqrt{1-4\beta_{m, \pi}^{2 }}}{2}.$$
We consider the following cases:
\begin{itemize}
	\item  When $|\beta_{m, \pi}|<\delta  \ll 1$:   \begin{align}\label{leq 1}\nonumber
		&  \lambda_1=\frac{-1- \sqrt{1-4\beta_{m, \pi}^{2 }}}{2}
		=\frac{-1- (1-4{\beta_{m, \pi}}^2 )^{\frac{1}{2}}}{2}=-1+\mathcal{O}\left(\beta_{m, \pi}^{2 }\right),\\
		&  \lambda_{2}= \frac{-1+ \sqrt{1-4\beta_{m, \pi}^{2 }}}{2}=\frac{-1+(1-4\beta_{m, \pi}^2 )^{\frac{1}{2}}}{2}=-\beta_{m, \pi}^{2 }+\mathcal{O}\left(\beta_{m, \pi}^4\right).
	\end{align}
	
	\item When $|\beta_{m, \pi}|>N \gg 1$: \begin{align*} 
		\lambda_{1}= \frac{-1-(1-4\beta_{m, \pi}^2 )^{\frac{1}{2}}}{2}  &=\frac{-1-2i|\beta_{m, \pi}| (1-(4\beta_{m, \pi}^2)^{-1})^{\frac{1}{2}}} {2}  
		\\&=  -\frac{1}{2}-i|\beta_{m, \pi}|+\mathcal{O}\left(|\beta_{m, \pi}|^{-1}\right), \end{align*} 
  \begin{align}\label{leq 11}
		\lambda_{2}= \frac{-1+(1-4\beta_{m, \pi}^2 )^{\frac{1}{2}}}{2}  =  -\frac{1}{2}+i|\beta_{m, \pi}|+\mathcal{O}\left(|\beta_{m, \pi}|^{-1}\right).\end{align}
	\item When $\delta<|\beta_{m, \pi}|<N$: \begin{align}\label{leq 111}
		\text{Re}( \lambda_{1})<0\quad \text{ and}\quad  \text{Re}( \lambda_{2})<0.
	\end{align} 
\end{itemize}
Thus, the solution to the   homogeneous  system   (\ref{eqq7}) is given by
\begin{align}\label{eqq77} 
	\widehat{u}(t, \pi)_{m, k}&= K_0(t,  \pi)_{m,k} \widehat{u}_0( \pi)_{m, k}+ K_1(t,  \pi)_{m,k}\widehat{u}_1( \pi)_{m,k},
\end{align}
where from (\ref{leq 1}), (\ref{leq 11}), and (\ref{leq 111}), we can write
\begin{align}\label{shyam} 
	& 	K_0(t,  \pi)_{m,k}=\frac{\lambda_1{e}^{\lambda_2 t}-\lambda_2{e}^{\lambda_1 t}}{\lambda_1-\lambda_2} \nonumber \\\\&=\begin{cases}
		\frac{\left(-1+\mathcal{O}(\beta_{m, \pi}^2)\right){e}^{\left(-\beta_{m, \pi}^{2 }+\mathcal{O} (\beta_{m, \pi}^{4} )\right)t}-\left(-\beta_{m, \pi}^{2 }+\mathcal{O} (\beta_{m, \pi}^{4})\right){e}^{\left(-1 +\mathcal{O} (\beta_{m, \pi}^{2} )\right)t}}{-1+\mathcal{O}\left(\beta_{m, \pi}^{2 }\right)} & \text { for }|\beta_{m, \pi}|<\delta,  \\\\ 	\frac{\left(i|\beta_{m, \pi}|-\frac{1}{2}+\mathcal{O}\left(|\beta_{m, \pi}|^{-1}\right)\right)  {e}^{\left(-i|\beta_{m, \pi}|-\frac{1}{2}+\mathcal{O}\left(|\beta_{m, \pi}|^{-1}\right)\right) t}}{2 i|\beta_{m, \pi}|+\mathcal{O}(1)} &  \\\\
		\qquad 	-\frac{\left(-i|\beta_{m, \pi}|-\frac{1}{2}+\mathcal{O}\left(|\beta_{m, \pi}|^{-1}\right)\right)  {e}^{\left(i|\beta_{m, \pi}|-\frac{1}{2}+\mathcal{O}\left(|\beta_{m, \pi}|^{-1}\right)\right) t}}{2 i|\beta_{m, \pi}|+\mathcal{O}(1)}
		& \text { for }|\beta_{m, \pi}|>N .
	\end{cases} 
\end{align}

and
\begin{align} \label{shyam2}
	&\nonumber	K_1(t,  \pi)_{m,k}=\frac{ {e}^{\lambda_1 t}-{e}^{\lambda_2 t}}{\lambda_1-\lambda_2}\\\\&=\left\{\begin{array}{ll}
		\frac{ {e}^{\left(-1+\mathcal{O}\left(\beta_{m, \pi}^2\right)\right) t}- {e}^{\left(-\beta_{m, \pi}^2+\mathcal{O}\left(\beta_{m, \pi}^4\right)\right) t}}{-1+\mathcal{O}\left(\beta_{m, \pi}^2\right)} & \text { for }|\beta_{m, \pi}|<\delta, \\\\
		\frac{ {e}^{\left(i|\beta_{m, \pi}|-\frac{1}{2}+\mathcal{O}\left(|\beta_{m, \pi}|^{-1}\right)\right) t}- {e}^{\left(-i|\beta_{m, \pi}|-\frac{1}{2}+\mathcal{O}\left(|\beta_{m, \pi}|^{-1}\right)\right) t}}{2 i|\beta_{m, \pi}|+\mathcal{O}(1)}  & \text { for }|\beta_{m, \pi}|>N .
	\end{array}\right.
\end{align}
Thus from the above  asymptotic expression of eigenvalues, for each $m, k\in \mathbb N$ and for all $\pi \in \widehat{\mathbb{G}}$,  we have the following point-wise estimates for $K_0$ and $K_1$:
\begin{align}\label{K_0estimate}
	\left|K_0(t, \pi)_{m,k}\right| \lesssim\left\{\begin{array}{ll}
		|\beta_{m, \pi}|^2  {e}^{-c t}+ {e}^{-ct\beta_{m, \pi}^2 } & \text { for }|\beta_{m, \pi}|<\delta \ll 1, \\\\
		\mathrm{e}^{-c t} & \text { for } \delta \leq|\beta_{m, \pi}| \leq N, \\\\
		\mathrm{e}^{-c t} & \text { for }|\beta_{m, \pi}|>N \gg 1,
	\end{array}\right.
\end{align}
and 
\begin{align}\label{K_1estimate}
	\left|K_1(t, \pi)_{m,k}\right| \lesssim\left\{\begin{array}{ll}
		{e}^{-c t}+ {e}^{-ct\beta_{m, \pi}^2 } & \text { for }|\beta_{m, \pi}|<\delta \ll 1, \\\\
		{e}^{-c t} & \text { for } \delta \leq|\beta_{m, \pi}| \leq N, \\\\
		|\beta_{m, \pi}|^{-1}  {e}^{-c t} & \text { for }|\beta_{m, \pi}|>N \gg 1,
	\end{array}\right.
\end{align}
for some   constant $c>0$.  Before we estimate $\|u(t, \cdot )\|_{\dot H^s }$,  for $ |\beta_{m,\pi}|<\delta$ we notice that 	\begin{align}\label{decay}
	\left| \beta_{m,\pi}^{\frac{4(s+\gamma)}{\nu}}     {e}^{-c\beta_{m,\pi}^2 t }\right|\lesssim (1+t)^{-\frac{2(s+\gamma)}{\nu}} 
\end{align}
for $s+\gamma \geq 0.$  To estimate (\ref{decay}), we used the fact that   $(\delta^2+y)^{\frac{s+\gamma}{\nu}} e^{-cy}$   is uniformly bounded for large $y$.

Now  using the Plancherel formula and using  the notation,  we obtain 
\begin{align}\label{eq01}\nonumber
	&\|u(t, \cdot )\|_{\dot H^s }^2=\int_{\widehat{\mathbb{G}}}\|\pi(\mathcal{R})^{\frac{s}{\nu}} \hat{u}(t,  \pi)\|_{\text{HS}}^2  \,d\mu( \pi)\\\nonumber
	&\qquad=\int_{\widehat{\mathbb{G}}} \sum_{m, k\in \mathbb{N}} \pi_m^{\frac{4s}{\nu}} |\hat{u}(t,  \pi)_{m,k}|^2  \,d\mu( \pi)\\\nonumber
	&\qquad\lesssim \int_{\widehat{\mathbb{G}}} \sum_{m, k\in \mathbb{N}}\pi_m^{\frac{4s}{\nu}} \left[  |K_0(t,  \pi)_{m,k}|^2  |\widehat{u}_0( \pi)_{m, k}|^2 + |K_1(t,  \pi)_{m,k}|^2 |\widehat{u}_1( \pi)_{m,k} |^2 \right] d\mu( \pi)\\\nonumber
 	&\qquad =   \sum_{m, k\in \mathbb{N}} \int_{\widehat{\mathbb{G}}} \pi_m^{\frac{4s}{\nu}} \left[  |K_0(t,  \pi)_{m,k}|^2  |\widehat{u}_0( \pi)_{m, k}|^2 + |K_1(t,  \pi)_{m,k}|^2 |\widehat{u}_1( \pi)_{m,k} |^2 \right] d\mu( \pi)\\
	&\qquad= I_{K_0}+I_{K_1},
\end{align}
where we switched integral and the series  due to the fact that each term is non-negative and  we denote
$$I_{K_0}:= \sum_{m, k\in \mathbb{N}} \int_{\widehat{\mathbb{G}}} \pi_m^{\frac{4s}{\nu}}   |K_0(t,  \pi)_{m,k}|^2  |\widehat{u}_0( \pi)_{m, k}|^2   d\mu( \pi)$$
and 
$$I_{K_1}:= \sum_{m, k\in \mathbb{N}} \int_{\widehat{\mathbb{G}}} \pi_m^{\frac{4s}{\nu}}  |K_1(t,  \pi)_{m,k}|^2 |\widehat{u}_1( \pi)_{m,k} |^2  d\mu( \pi).$$
When $|\beta_{m, \pi}|<\delta \ll 1$,  keeping in mind the notation    $\beta_{m, \pi}^{2 } = \pi_m^2$, using \eqref{K_0estimate}, \eqref{K_1estimate}, and \eqref{decay},  we obtain the following estimates: 
\begin{align}\label{eq02}\nonumber
	I_{K_0}&= \sum_{m, k\in \mathbb{N}} \int_{\{\pi\in \widehat{\mathbb{G}}:|\beta_{m, \pi}|<\delta  \}} \pi_m^{\frac{4(s+\gamma)}{\nu}}   |K_0(t,  \pi)_{m,k}|^2  \beta_{m,\pi}^{-\frac{4\gamma}{\nu}}|\widehat{u}_0( \pi)_{m, k}|^2   d\mu( \pi)\\\nonumber
	&	 \lesssim   \sum_{m, k\in \mathbb{N}} \int_{\{\pi\in \widehat{\mathbb{G}}:|\beta_{m, \pi}|<\delta  \}} \beta_{m,\pi}^{\frac{4(s+\gamma)}{\nu}}   \{	|\beta_{m, \pi}|^2  {e}^{-c t}+ {e}^{-ct\beta_{m, \pi}^2 }\}^2 \beta_{m,\pi}^{-\frac{4\gamma}{\nu}}|\widehat{u}_0( \pi)_{m, k}|^2   d\mu( \pi)\\\nonumber
	&	=   \sum_{m, k\in \mathbb{N}} \int_{\{\pi\in \widehat{\mathbb{G}}:|\beta_{m, \pi}|<\delta  \}} \beta_{m,\pi}^{\frac{4(s+\gamma)}{\nu}}  {e}^{-c2t\beta_{m, \pi}^2 }  \{	|\beta_{m, \pi}|^2  {e}^{-c t(1-\beta_{m, \pi}^2 )}+ 1 \}^2 \beta_{m,\pi}^{-\frac{4\gamma}{\nu}}|\widehat{u}_0( \pi)_{m, k}|^2   d\mu( \pi)\\\nonumber
	& \lesssim (1+t)^{-\frac{2(s+\gamma)}{\nu}}	    \sum_{m, k\in \mathbb{N}} \int_{\{\pi\in \widehat{\mathbb{G}}:|\beta_{m, \pi}|<\delta  \}}   \beta_{m,\pi}^{-\frac{4\gamma}{\nu}}|\widehat{u}_0( \pi)_{m, k}|^2   d\mu( \pi)\\
	& \lesssim (1+t)^{-\frac{2(s+\gamma)}{\nu}}	  \|{u}_0 \|_{\dot H^{-\gamma}}^2,
\end{align}
and  
\begin{align}\label{eq03}\nonumber
	I_{K_1}&= \sum_{m, k\in \mathbb{N}} \int_{\{\pi\in \widehat{\mathbb{G}}:|\beta_{m, \pi}|<\delta  \}} \pi_m^{\frac{4s}{\nu}}  |K_1(t,  \pi)_{m,k}|^2 |\widehat{u}_1( \pi)_{m,k} |^2  d\mu( \pi)\\\nonumber
	&	\leq   \sum_{m, k\in \mathbb{N}} \int_{\{\pi\in \widehat{\mathbb{G}}:|\beta_{m, \pi}|<\delta  \}} \beta_{m,\pi}^{\frac{4(s+\gamma)}{\nu}}   \{	   {e}^{-c t}+ {e}^{-ct\beta_{m, \pi}^2 }\}^2 \beta_{m,\pi}^{-\frac{4\gamma}{\nu}}|\widehat{u}_0( \pi)_{m, k}|^2   d\mu( \pi)\\\nonumber
	&	\leq   \sum_{m, k\in \mathbb{N}} \int_{\{\pi\in \widehat{\mathbb{G}}:|\beta_{m, \pi}|<\delta  \}} \beta_{m,\pi}^{\frac{4(s+\gamma)}{\nu}} {e}^{-2ct\beta_{m, \pi}^2 }  \{	   {e}^{-c t(1-\beta_{m, \pi}^2)}+  1\}^2 \beta_{m,\pi}^{-\frac{4\gamma}{\nu}}|\widehat{u}_0( \pi)_{m, k}|^2   d\mu( \pi)\\\nonumber
	& \lesssim (1+t)^{-\frac{2(s+\gamma)}{\nu}}	    \sum_{m, k\in \mathbb{N}} \int_{\{\pi\in \widehat{\mathbb{G}}:|\beta_{m, \pi}|<\delta  \}}   \beta_{m,\pi}^{-\frac{4\gamma}{\nu}}|\widehat{u}_1( \pi)_{m, k}|^2   d\mu( \pi)\\
	&\lesssim (1+t)^{-\frac{2(s+\gamma)}{\nu}}	  \|{u}_1 \|_{ \dot H^{-\gamma}}^2   .
\end{align}
 
Now consider the case  $|\beta_{m, \pi}|>N  \gg 1$. Using   the estimates \eqref{K_0estimate} and  \eqref{K_1estimate}, we find that
\begin{align}\label{eq04}\nonumber
	I_{K_0}&= \sum_{m, k\in \mathbb{N}} \int_{\{\pi\in \widehat{\mathbb{G}}:|\beta_{m, \pi}|>N \}} \pi_m^{\frac{4s}{\nu}}   |K_0(t,  \pi)_{m,k}|^2  |\widehat{u}_0( \pi)_{m, k}|^2   d\mu( \pi)\\\nonumber
	& 	\lesssim  {e}^{-2c t}   \sum_{m, k\in \mathbb{N}} \int_{\{\pi\in \widehat{\mathbb{G}}:|\beta_{m, \pi}|>N \}}  \beta_{m,\pi}^{\frac{4s}{\nu}}    |\widehat{u}_0( \pi)_{m, k}|^2   d\mu( \pi)\\
	& \lesssim 
 {e}^{-2c t}   \|{u}_0 \|_{H ^{-s}}^2,
\end{align}
and 
\begin{align}\label{eq05}\nonumber
	I_{K_1}&= \sum_{m, k\in \mathbb{N}} \int_{\{\pi\in \widehat{\mathbb{G}}:|\beta_{m, \pi}|>N \}} \pi_m^{\frac{4s}{\nu}}  |K_1(t,  \pi)_{m,k}|^2 |\widehat{u}_1( \pi)_{m,k} |^2  d\mu( \pi)\\\nonumber
	&\lesssim    {e}^{-2c t}    \sum_{m, k\in \mathbb{N}} \int_{\{\pi\in \widehat{\mathbb{G}}:|\beta_{m, \pi}|>N \}} \beta_{m,\pi}^{\frac{4s}{\nu}}  	|\beta_{m, \pi}|^{-2}  |\widehat{u}_1( \pi)_{m,k} |^2  d\mu( \pi)\\\nonumber
	&=   {e}^{-2c t}  \sum_{m, k\in \mathbb{N}} \int_{\{\pi\in \widehat{\mathbb{G}}:|\beta_{m, \pi}|>N \}}   \left(  { \beta_{m,\pi}^{2}} \right)^{\frac{2s}{\nu}-1} 	    ~|\widehat{u}_1( \pi)_{m,k} |^2  d\mu( \pi)\\\nonumber
	&=   {e}^{-2c t}  \sum_{m, k\in \mathbb{N}} \int_{\{\pi\in \widehat{\mathbb{G}}:|\beta_{m, \pi}|>N \}}  (1+\beta_{m,\pi}^2)^{\frac{2(s-1)}{\nu}}  \left( \frac{ \beta_{m,\pi}^{2}}{1+  \beta_{m,\pi}^{2}}\right)^{\frac{2s}{\nu}-1} 	  \frac{ 1}{    \left(1+  \beta_{m,\pi}^{2}\right)^{1-\frac{2}{\nu}} } 	 ~|\widehat{u}_1( \pi)_{m,k} |^2  d\mu( \pi)\\
	&\lesssim     {e}^{-2c t}  \|{u}_1 \|_{H ^{s-1}}^2,
\end{align}
where we used     $\nu\geq 2$  to get the last  inequality.  

Now we consider the  final case when   $\delta<|\beta_{m, \pi}|<N$. Again  from the estimates \eqref{K_0estimate}  and  \eqref{K_1estimate}, we get
\begin{align}\label{eqs01}\nonumber
	I_{K_0}&= \sum_{m, k\in \mathbb{N}} \int_{\{\pi\in \widehat{\mathbb{G}}:\delta\leq |\beta_{m, \pi}|\leq N \}}\pi_m^{\frac{4s}{\nu}}   |K_0(t,  \pi)_{m,k}|^2  |\widehat{u}_0( \pi)_{m, k}|^2   d\mu( \pi)\\\nonumber
	& \lesssim {e}^{-2c t}  \sum_{m, k\in \mathbb{N}} \int_{\{\pi\in \widehat{\mathbb{G}}:\delta \leq |\beta_{m, \pi}|\leq N \}}\beta_{m,\pi}^{\frac{4s}{\nu}}    |\widehat{u}_0( \pi)_{m, k}|^2   d\mu( \pi)\\\nonumber
	& \lesssim {e}^{-2c t}  \sum_{m, k\in \mathbb{N}} \int_{\{\pi\in \widehat{\mathbb{G}}:\delta\leq |\beta_{m, \pi}|\leq N \}}     (1+\beta_{m,\pi}^2)^{\frac{2s}{\nu}}    
	|\widehat{u}_0( \pi)_{m, k}|^2   d\mu( \pi)\\& \lesssim {e}^{-2c t} \|{u}_0 \|_{H ^{s}}^2,
\end{align}
and 
\begin{align}\label{eqs05}\nonumber
	I_{K_1}&= \sum_{m, k\in \mathbb{N}} \int_{\{\pi\in \widehat{\mathbb{G}}:\delta\leq |\beta_{m, \pi}|\leq N \}}\pi_m^{\frac{4s}{\nu}}  |K_1(t,  \pi)_{m,k}|^2 |\widehat{u}_1( \pi)_{m,k} |^2  d\mu( \pi)\\\nonumber
	& \lesssim {e}^{-2c t}   \sum_{m, k\in \mathbb{N}} \int_{\pi\in \widehat{\mathbb{G}}:\delta\leq |\beta_{m, \pi}|\leq N \}}(1+\beta_{m,\pi}^2)^{\frac{2s}{\nu}}    |\widehat{u}_0( \pi)_{m, k}|^2   d\mu( \pi)\\\nonumber
	& = {e}^{-2c t}  \sum_{m, k\in \mathbb{N}} \int_{\{\pi\in \widehat{\mathbb{G}}:\delta \leq |\beta_{m, \pi}|\leq N \}}(1+\beta_{m,\pi}^2)^{\frac{2(s-1)}{\nu}}   (1+\beta_{m,\pi}^2)^{\frac{2}{\nu}}      |\widehat{u}_0( \pi)_{m, k}|^2   d\mu( \pi) 	\\
	&\lesssim {e}^{-2c t} \|{u}_1 \|_{H ^{s-1}}^2.
\end{align}
 
 Combining   all the cases for $|\beta_{m, \pi}|$, that is,     (\eqref{eq02},  \eqref{eq03}),   (\eqref{eq04},  \eqref{eq05}), and (\eqref{eqs01},   \eqref{eqs05}) along with \eqref{eq01}, we  obtain    
$ \dot {H} ^s$-decay estimate for the solution to linear system  \eqref{Linear-system}  as  $$
\|u(t, \cdot)\|_{ \dot{H} ^s} \lesssim(1+t)^{-\frac{s+\gamma}{\nu}}\left(\left\|u_0\right\|_{H^s  \cap \dot{H}^{-\gamma} }+\left\|u_1\right\|_{H ^{s-1} \cap \dot{H} ^{-\gamma}}\right)
$$		for any $t\geq 0$.
\end{proof}
\subsection{Global existence result} 		 
Denoting $p_{\text {Crit }}(Q, \gamma, \nu):=1+\frac{2\nu}{Q+2\gamma}$, we have the following global in-time well-posedness result for small data solutions to the Cauchy problem (\ref{eq0010}) in the energy evolution space  $\mathcal C\left([0,T],  H^s(\mathbb{G})\right) $.  
\begin{proof}[Proof of Theorem \ref{well-posed}]
Our main aim is to prove the following 	two crucial inequalities 
\begin{align}\label{to prove1}
	\|N u\|_{X_s(T)} & \lesssim\left\|\left(u_{0}, u_{1}\right)\right\|_{ \mathcal{A}^s}+\|u\|_{X_s(T)}^{p},\\\label{to prove2}		\|N u-N v\|_{X_s(T)} &\lesssim \|u-v\|_{X_s(T)}\left(\|u\|_{X_s(T)}^{p-1}+\|v\|_{X_s(T)}^{p-1}\right).
\end{align}  
Now from   the estimate (\ref{hom}) of Theorem \ref{Linear}, for  $s \geq 0$ and $\gamma \in \mathbb{R}$ such that $s+\gamma \geq  0$,  we recall that 
\begin{equation} \label{4er}
	\|u(t, \cdot)\|_{\dot{{H}} ^s(\HH)} \lesssim(1+t)^{-\frac{s+\gamma}{\nu}}\left(\left\|u_0\right\|_{H^s  \cap \dot {{H}} ^{-\gamma} }+\left\|u_1\right\|_{H ^{s-1}  \cap \dot{{H}} ^{-\gamma} }\right) .
\end{equation}
Now depending on the value of $\gamma $, from \eqref{4er},  	using  the Sobolev embedding   $L^2\subset H ^{s-1}$ for $s\leq 1$ and $ H ^{s} \subset L^2$ for $s\geq0$, we have the following   crucial estimates for Sobolev solutions to the linear Cauchy problem.
\begin{itemize}
	\item For   $\gamma=0$ and  $s \in [0, 1]$, we get 
	\begin{align}\label{eq15}\nonumber
		\|u(t, \cdot)\|_{{\dot{H}} ^s}&
		\lesssim(1+t)^{-\frac{s}{\nu}}\left(\left\|u_0\right\|_{H^s  \cap L^2 }+\left\|u_1\right\|_{H ^{s-1}  \cap L^2}\right) \\ &\lesssim(1+t)^{-\frac{s}{\nu}}\left(\left\|u_0\right\|_{H ^s }+\left\|u_1\right\|_{L^2}\right).
	\end{align}
	\item 	For $\gamma>0$ and  $s \in [0, 1]$, 	 	we have 
	\begin{align}\label{eq14}
		\|u(t, \cdot)\|_{\dot{{H}} ^s} \lesssim(1+t)^{-\frac{s+\gamma}{\nu}}\left(\left\|u_0\right\|_{ H^s \cap {\dot{H}} ^{-\gamma}}+\left\|u_1\right\|_{L^2 \cap {\dot{H}} ^{-\gamma}}\right).
	\end{align}
	
\end{itemize}
In particular,  for $s=0$ in  (\ref{eq14}), using  the Sobolev embedding $ H ^{s} \subset L^2$ for $s\geq0$, we obtain \begin{align}\label{eq1444}
	\nonumber	\|u(t, \cdot)\|_{L^2} &\lesssim(1+t)^{-\frac{\gamma}{\nu}}\left(\left\|u_0\right\|_{ L^2 \cap {\dot{H}} ^{-\gamma}}+\left\|u_1\right\|_{L^2 \cap {\dot{H}} ^{-\gamma}}\right) \\&\lesssim(1+t)^{-\frac{\gamma}{\nu}}\left(\left\|u_0\right\|_{ H^s \cap {\dot{H}} ^{-\gamma}}+\left\|u_1\right\|_{L^2 \cap {\dot{H}} ^{-\gamma}}\right).
\end{align}
Now from (\ref{eq14}) and   (\ref{eq1444}), for   $  s\in (0, 1]$ and $\gamma>0$,   we can write
\begin{align*}
	&	(1+t)^{\frac{s+\gamma}{\nu}}\|u(t, \cdot)\|_{{\dot {H}} ^s} +(1+t)^{\frac{\gamma}{\nu}}\|u(t, \cdot)\|_{L^2}\\ &\quad \lesssim \left(\left\|u_0\right\|_{H^s \cap {\dot{H}} ^{-\gamma}}+\left\|u_1\right\|_{L^2 \cap {\dot {H}} ^{-\gamma}}\right)=\left\|\left(u_{0}, u_{1}\right)\right\|_{ \mathcal{A}^s}.
\end{align*}
Thus from above, using the notation (\ref{Evolution norm}), we can   claim     
$u \in X_s(T)$ such that  \begin{align}\label{2number100}
	\|u^{\text{lin}}\|_{X_s(T)} \lesssim   \left\|\left(u_{0}, u_{1}\right)\right\|_{ \mathcal{A}^s}.
\end{align} 
Now under some conditions for $p$, we show that  
$$	\|u^{\text{non}}\|_{X_s(T)} \lesssim    \left\| u\right\|_{ X_s(T)}^p.$$
First we   calculate $L^2$    and $\dot H^{-\gamma} $ norm   of $|u(t, \cdot)|^p$ 	 in order to  estimate    $\|u^{\text{non}}\|_{X_s(T)} $.   

\textbf{\underline{$L^2$  norm   of $|u(t, \cdot)|^p$}:} Observe that  $\left \||u(\kappa, \cdot)|^p\right\|_{L^2}=	\left \|u(\kappa, \cdot)\right\|_{L^{2p}}^p$.  Now applying the Gagliardo-Nirenberg  inequality (\ref{eq16}) for $\left \|u(\kappa, \cdot)\right\|_{L^{2p}}$, we get
\begin{align}\label{eqq21}\nonumber
	&	\left \||u(\kappa, \cdot)|^p\right\|_{L^2}=	\left \|u(\kappa, \cdot)\right\|_{L^{2p}}^p\\\nonumber
	&\lesssim\|u(\kappa, \cdot)\|_{\dot {H}^s}^{p\theta} \|u(\kappa, \cdot)\|_{L^2 }^{p(1-\theta)}\\\nonumber
	&= (1+\kappa)^{-\frac{p}{\nu}\left[ s\theta+\gamma \right ]}  \left\{ (1+\kappa)^{\frac{(s+\gamma)}{\nu} } \|u(\kappa, \cdot)\|_{\dot {H}^s} \right\}^{p\theta}  \left\{ (1+\kappa)^{\frac{\gamma}{\nu} } \|u(\kappa, \cdot)\|_{L^2}\right\}^{p(1-\theta)}\\\nonumber
	&= (1+\kappa)^{-\frac{p}{\nu}\left[ \gamma+\frac{Q}{2}\left(1-\frac{1}{p}\right) \right ]}  \left\{ (1+\kappa)^{\frac{(s+\gamma)}{\nu} } \|u(\kappa, \cdot)\|_{\dot {H}^s } \right\}^{p\theta}  \left\{ (1+\kappa)^{\frac{\gamma}{\nu} } \|u(\kappa, \cdot)\|_{L^2 }\right\}^{p(1-\theta)}\\
	&\lesssim (1+\kappa)^{-\frac{p}{\nu}\left(  \gamma+\frac{Q}{2} \right ) +\frac{Q}{2\nu}}    \left\| u\right\|_{ X_s(T)}^p,
\end{align}
for $\kappa\in [0, T]$ with $\theta=\frac{Q}{2s}(1-\frac{1}{p})\in [0, 1]$. Using the fact that  $\theta\in [0,1]$, we have the restriction on    $p$ as  
\begin{align}\label{eq18}
	1\leq p\leq \frac{Q}{Q-2s} \quad \text{if} ~~Q>2s.
\end{align}
\textbf{\underline{$\dot H^{-\gamma} $   norm   of $|u(t, \cdot)|^p$}:}	 From the    Hardy-Littlewood-Sobolev inequality (\ref{eq177}), we obtain
\begin{align}\label{Hgamma}
	\left \||u(\kappa, \cdot)|^p\right\|_{\dot H^{-\gamma} }
	\lesssim	\left \| |u(\kappa, \cdot)|^p\right\|_{L^{m}}
	=	\left \| u(\kappa, \cdot)\right\|_{L^{mp}}^p,
\end{align}
where  $\frac{1}{m}-\frac{1}{2}=\frac{\gamma}{Q}$ provided that  $0<\gamma <Q$ and $1<m<2$. Also, from the fact  that    $m \in(1,2)$, we have to restrict    $ 0<\gamma  <\frac{Q}{2}.$    Now  applying  the Gagliardo-Nirenberg inequality (\ref{eq16}) for $\left \| u(\kappa, \cdot)\right\|_{L^{mp}}$,  from \eqref{Hgamma}, we obtain
\begin{align}\label{eq220}\nonumber
	&	\left \||u(\kappa, \cdot)|^p\right\|_{\dot H^{-\gamma} } \lesssim\| u(\kappa, \cdot)\|_{\dot {H}^s(\mathbb{G})}^{p\theta_1} \|u(\kappa, \cdot)\|_{L^2(\mathbb{G})}^{p(1-\theta_1)}\\\nonumber
	&= (1+\kappa)^{-\frac{p}{\nu} ( s\theta_1+\gamma)   }  \left\{ (1+\kappa)^{\frac{(s+\gamma)}{\nu} } \|u(\kappa, \cdot)\|_{\dot {H}^s(\mathbb{G})} \right\}^{p\theta_1}  \left\{ (1+\kappa)^{\frac{\gamma}{\nu} } \|u(\kappa, \cdot)\|_{L^2(\mathbb{G})}\right\}^{p(1-\theta_1)}\\\nonumber
	&\lesssim (1+\kappa)^{-\frac{p}{\nu} \left[  {Q}(\frac{1}{2}-\frac{1}{mp})+\gamma\right]  } \left\| u\right\|_{ X_s(T)}^p\\ 
	&\lesssim (1+\kappa)^{-\frac{p}{\nu} ( {\gamma}+\frac{Q}{2} )+  \frac{1}{\nu}\left(  {\gamma}+\frac{Q}{2}\right)  }  \left\| u\right\|_{ X_s(T)}^p,
\end{align}
for $\kappa\in [0, T]$ provided  $\theta_1=\frac{Q}{s}(\frac{1}{2}-\frac{1}{mp})\in [0, 1]$.  Since $\theta_1\in [0, 1]$, we  have another restriction $p$ as 
\begin{align}\label{eq17}\nonumber
	& \frac{2}{m} \leq p\leq   \frac{2Q}{m(Q-2s)}\\\implies&     	1+\frac{2\gamma}{Q} \leq p  
	\leq   \frac{Q+2\gamma}{Q-2s}\quad  \text { if } Q>2s,
\end{align}
where we used  $m=\frac{2Q}{Q+2\gamma} $. In conclusion,  from   (\ref{eq18}) and (\ref{eq17}), if we consider 
\begin{align*}
	1+\frac{2\gamma}{Q} \leq p  \left\{\begin{array}{ll}
		<\infty & \text { if } Q \leq 2s, \\
		\leq   \frac{Q}{Q-2s}& \text { if } Q>2s,
	\end{array}\right. 
\end{align*}
then from  the estimates  (\ref{eqq21}) and (\ref{eq220}),   it immediately  follows that   
\begin{align}\label{eq20}
	\left \||u(\kappa, \cdot)|^p\right\|_{L^2\cap \dot H^{-\gamma} }\lesssim (1+\kappa)^{-\frac{p}{\nu} (  {\gamma}+\frac{Q}{2} )+  \frac{1}{\nu}\left(  {\gamma}+\frac{Q}{2}\right)  } \left\| u\right\|_{ X_s(T)}^p
\end{align}
for     $\kappa\in [0, T]$. In order to   estimate the solution itself in $L^2$, we first see that 
\begin{align}  \label{mk}
	\left\|u^{\mathrm{non}}(t, \cdot)\right\|_{L^2}&=\bigg \|  \int\limits_0^t  |u(\kappa,\cdot )|^p* E_1(t-\kappa, \cdot) d\kappa\bigg\|_{L^2 }\nonumber\\
	&\leq  \int\limits_0^t \| |u(\kappa,\cdot )|^p*  E_1(t-\kappa, \cdot)\|_{L^2 }d\kappa. 
\end{align}
Next, note that   $\tilde{u}(t,x):=(|u(\kappa,\cdot)|^p*  E_1(t, \cdot))(x)$ is also a solution to the linear system (\ref{Linear-system}) with  initial data $u_0=0$, $u_1= |u(\kappa, \cdot)|^p$. To estimate    Duhamel’s term on the interval  $[0, t]$,  we   apply  $ (L^2 \cap \dot {H}^{-\gamma} )-L^2$ estimate  (\ref{eq1444}) for  $\tilde{u}$ in \eqref{mk} to get
\begin{align}\nonumber
	\left\|u^{\mathrm{non}}(t, \cdot)\right\|_{L^2}
	&\lesssim   \int\limits_0^t (1+t-\kappa)^{-\frac{\gamma}{\nu}}  \| |u(\kappa,\cdot)|^p\|_{L^2 \cap \dot H^{-\gamma} }  d\kappa\\\nonumber
	&\lesssim \int_0^t(1+t-\kappa)^{-\frac{\gamma}{\nu}}  (1+\kappa)^{-\frac{p}{\nu} (  {\gamma}+\frac{Q}{2} )+  \frac{1}{\nu}\left(  {\gamma}+\frac{Q}{2}\right)  }  d\kappa ~  \left\| u\right\|_{ X_s(T)}^p\\\nonumber
	& \lesssim(1+t)^{-\frac{\gamma}{\nu}} \int_0^{\frac{t}{2}}(1+\kappa)^{-\frac{p}{\nu} (  {\gamma}+\frac{Q}{2} )+  \frac{1}{\nu}\left(  {\gamma}+\frac{Q}{2}\right)  } d\kappa ~  \left\| u\right\|_{ X_s(T)}^p\\\label{eq222222}
	& \qquad +(1+t)^{-\frac{p}{\nu} (  {\gamma}+\frac{Q}{2} )+  \frac{1}{\nu}\left(  {\gamma}+\frac{Q}{2}\right)  }  \int_{\frac{t}{2}}^t(1+t-\kappa)^{-\frac{\gamma}{\nu}} \mathrm{~d} \kappa\|u\|_{X_s(T)}^p,
\end{align}
where we used the  estimate      (\ref{eq20})  for  the nonlinear term. 

Now we estimate separately the two integrals on the right-hand side of (\ref{eq222222}).  Notice  that the first  integral  in (\ref{eq222222})    $$\int_0^{\frac{t}{2}}(1+\kappa)^{-\frac{p}{\nu} (  {\gamma}+\frac{Q}{2} )+  \frac{1}{\nu}\left(  {\gamma}+\frac{Q}{2}\right)  } d\kappa $$  converges  uniformly  over $\left[0, \frac{t}{2}\right]$ for   $p>1+\frac{2\nu}{Q+2 \gamma}$. For the integral over $[\frac{t}{2}, t]$, we see that    
$$
\int_{\frac{t}{2}}^t(1+t-\kappa)^{-\frac{\gamma}{\nu}} \mathrm{~d} \kappa \lesssim\left\{\begin{array}{ll}
	(1+t)^{1-\frac{\gamma}{\nu}} & \text { if } \gamma<\nu, \\
	\ln (\mathrm{e}+t) & \text { if } \gamma=\nu, \\
	1 & \text { if } \gamma>\nu.
\end{array}\right.
$$
\begin{itemize}
	\item  Considering $p>1+\frac{2\nu}{Q+2 \gamma}$ if $\gamma \leq \nu$, we   see that 
	\begin{align}\label{gamma<nu}\nonumber
		&	(1+t)^{-\frac{p}{\nu} (  {\gamma}+\frac{Q}{2} )+  \frac{1}{\nu}\left(  {\gamma}+\frac{Q}{2}\right)  }  \int_{\frac{t}{2}}^t(1+t-\kappa)^{-\frac{\gamma}{\nu}}  {~d} \kappa\\\nonumber
		&\leq 	(1+t)^{-1}  \int_{\frac{t}{2}}^t(1+t-\kappa)^{-\frac{\gamma}{\nu}}  {~d} \kappa \\\nonumber
		&\lesssim 	(1+t)^{-1} \times  \left\{\begin{array}{ll}
			(1+t)^{1-\frac{\gamma}{\nu}} & \text { if } \gamma<\nu, \\
			\ln (\mathrm{e}+t) & \text { if } \gamma=\nu, 
		\end{array}\right. \\
		&\lesssim(1+t)^{-\frac{\gamma}{\nu}} . 
	\end{align}
	\item Considering  $p>1+\frac{2 \gamma}{Q+2 \gamma}$ if $\gamma>\nu$,  we get 
	\begin{align}\label{gamma>nu}\nonumber
		&(1+t)^{-\frac{p}{\nu} (  {\gamma}+\frac{Q}{2} )+  \frac{1}{\nu}\left(  {\gamma}+\frac{Q}{2}\right)  }  \int_{\frac{t}{2}}^t(1+t-\kappa)^{-\frac{\gamma}{\nu}}  {~d} \kappa\\
		&\leq 	(1+t)^{-\frac{\gamma}{\nu}}   \int_{\frac{t}{2}}^t(1+t-\kappa)^{-\frac{\gamma}{\nu}}  {~d} \kappa   \lesssim(1+t)^{-\frac{\gamma}{\nu}} .
	\end{align}
\end{itemize}
Thus from \eqref{eq222222}, \eqref{gamma<nu}, and \eqref{gamma>nu}, we finally get 
\begin{align}\label{eq23}
	(1+t)^{\frac{\gamma}{\nu}} 	\|u^{\text{non}}(t, \cdot )\|_{L^2} \lesssim    \left\| u\right\|_{ X_s(T)}^p. \end{align}
Again, in order to   estimate the solution itself in  $\dot {H}^s$, using     $\left(L^2 \cap \dot {H}^{-\gamma}\right)-\dot 
{H}^s$ estimate  (\ref{eq14}) for the interval $\left[0, \frac{t}{2}\right]$ and the $L^2-\dot {H}^s$ estimate (\ref{eq15}) for  the interval  $\left[\frac{t}{2}, t\right]$, we get 
\begin{align}\nonumber
	\left\|u^{\mathrm{non}}(t, \cdot)\right\|_{\dot {H}^s}
	&\leq   \int\limits_0^t \|    |u(\kappa,\cdot )|^p* E_1(t-\kappa, \cdot)  \|_{\dot {H}^s} d\kappa\\\nonumber
	& \lesssim \int_0^{\frac{t}{2}}(1+t-\kappa)^{-\frac{(s+\gamma)}{\nu}} (1+\kappa)^{-\frac{p}{\nu} (  {\gamma}+\frac{Q}{2} )+  \frac{1}{\nu}\left(  {\gamma}+\frac{Q}{2}\right)  }  d\kappa ~  \left\| u\right\|_{ X_s(T)}^p\\\nonumber
	& \qquad + \int_{\frac{t}{2}}^t (1+t-\kappa)^{-\frac{s}{\nu}}(1+\kappa)^{-\frac{p}{\nu} (  {\gamma}+\frac{Q}{2} )+  \frac{Q}{2\nu}  }   \mathrm{~d} \kappa\|u\|_{X_s(T)}^p \\\nonumber
	& \lesssim(1+t)^{-\frac{(s+\gamma)}{\nu}} \int_0^{\frac{t}{2}}(1+\kappa)^{-\frac{p}{\nu} (  {\gamma}+\frac{Q}{2} )+  \frac{1}{\nu}\left(  {\gamma}+\frac{Q}{2}\right)  } d\kappa ~  \left\| u\right\|_{ X_s(T)}^p\\\label{eq22}
	& \qquad +(1+t)^{-\frac{p}{\nu} (  {\gamma}+\frac{Q}{2} )+  \frac{Q}{2\nu}  }   \int_{\frac{t}{2}}^t(1+t-\kappa)^{-\frac{s}{\nu}} \mathrm{~d} \kappa\|u\|_{X_s(T)}^p .
\end{align}
For $s\in (0, 1]$ and  $p>1+\frac{2\nu}{Q+2 \gamma}$,
following the same computations   performed earlier, we can write the following:
\begin{align}\label{eqq22}
	(1+t)^{\frac{s+\gamma}{\nu}}\left\|u^{ \text{non}}(t, \cdot)\right\|_{\dot {H}^s} \lesssim\|u\|_{X_s(T)}^p .
\end{align}
Therefore, combining  (\ref{eq23}) and (\ref{eqq22}), it follows that
\begin{align}\label{LIN1}
	\left\|u^{ \text{non}}(t, \cdot)\right\|_{X_s(T)} \lesssim\|u\|_{X_s(T)}^p,
\end{align}
provided the exponent $p$  satisfies the   following  criteria:
\begin{center}
	$	1<	  p    \left\{\begin{array}{ll}
		<\infty & \text { if } Q \leq 2s, \\
		\leq   \frac{Q}{Q-2s}& \text { if } Q>2s,
	\end{array}\right. \quad $  and 
	$ \quad 
	p\begin{cases}
		>1+\frac{2\nu}{Q+2 \gamma}& \text{if} ~\gamma \leq \nu,\\
		>1+\frac{2 \gamma}{Q+2 \gamma}& \text{if $\gamma>\nu ,$}\\
		\geq 1+\frac{2 \gamma}{Q} . 
	\end{cases}$ 
\end{center} 
First, we examine the scenario when $\gamma>\nu$. Subsequently, its easy to see that 
$$
\max \left\{1+\frac{2 \gamma}{Q+2 \gamma}, 1+\frac{2 \gamma}{Q}\right\}=1+\frac{2 \gamma}{Q}.
$$
Next, we consider    $\gamma\leq \nu$.  In this case, we  have to compare between $1+\frac{2 \nu}{Q+2 \gamma}$  and $1+\frac{2 \gamma}{Q}$. Observe that the expressions  $1+\frac{2 \nu}{Q+2 \gamma}$  and $1+\frac{2 \gamma}{Q}$   intersect at a specific point   $\tilde \gamma$, which is nothing but  the positive root  of the quadratic equation $ 2\gamma^2+Q\gamma-\nu Q=0$.  Moreover, it is important to highlight  that the positive root  $\tilde  \gamma<\nu$ for any  $Q\geq 1.$  This implies that  
\begin{itemize}
	\item $\max \left\{1+\frac{2\nu}{Q+2 \gamma}, 1+\frac{2 \gamma}{Q} \right\}=1+\frac{2\nu}{Q+2 \gamma} $ for $\gamma \leq \tilde{\gamma}$,
	\item $\max \left\{1+\frac{2\nu}{Q+2 \gamma}, 1+\frac{2 \gamma}{Q} \right\}=1+\frac{2 \gamma}{Q} $ for  $\tilde{\gamma}<\gamma \leq \nu$.
\end{itemize}
Summarizing, for any $\gamma\in (0, \frac{Q}{2})$, the assumption  for the exponent $p$ is  reduced to    \begin{align*} 
	p\left\{\begin{array}{ll}
		>1+\frac{2\nu}{Q+2 \gamma} =p_{\text {Crit }}(Q, \gamma,\nu) & \text { if } \gamma \leq \tilde{\gamma}, \\
		\geq 1+\frac{2 \gamma}{Q} & \text { if } \gamma>\tilde{\gamma}.
	\end{array}\right. 
\end{align*}
Therefore, the condition for the exponent $p$  reduces to (\ref{eq24}).  Moreover, from (\ref{2number100}) and (\ref{LIN1}), we get our required first inequality (\ref{to prove1}), i.e., 
\begin{align}\label{eqlinera}
	\|N u\|_{X_s(T)} \lesssim  \left\|\left(u_{0}, u_{1}\right)\right\|_{ \mathcal{A}^s }+\|u\|_{X_s(T)}^{p}.
\end{align}
To establish (\ref{to prove2}), 	 our next objective is to   evaluate $
\|N u-N v\|_{X_s(T)}$.  First we notice that 
$$
\|N u-N v\|_{X_s(T)}=\left\|\int_0^t E_1(t-\kappa, \cdot) *\left(|u(\kappa, \cdot)|^p-|v(\kappa, \cdot)|^p\right)  {d} \kappa\right\|_{X_s(T)} .
$$
Now the   Hardy-Littlewood-Sobolev inequality  (\ref{eq177}) yields
\begin{align*}
	\left \| |u(\kappa, \cdot)|^p-|v(\kappa, \cdot)|^p \right\|_{\dot  H^{-\gamma} }
	\lesssim	\left \| |u(\kappa, \cdot)|^p-|v(\kappa, \cdot)|^p\right\|_{L^{m}}
\end{align*}
with $\frac{1}{m}-\frac{1}{2}=\frac{\gamma}{Q}$ provided    $\gamma \in (0,Q)$ and $m\in (1,2)$. 

Now using the fact that  $||u|^p-|v|^p|\leq p |u-v|(|u|^{p-1}+|v|^{p-1}) $ along with      H\"older's inequality on the right-hand side of the above inequality, we obtain 
\begin{align}\label{eq25}
	\left\||u(\kappa, \cdot)|^p-|v(\kappa, \cdot)|^p\right\|_{L^m} \lesssim\|u(\kappa, \cdot)-v(\kappa, \cdot)\|_{L^{m p}}\left(\|u(\kappa, \cdot)\|_{L^{m p}}^{p-1}+\|v(\kappa, \cdot)\|_{L^{m p}}^{p-1}\right) .
\end{align}
Our next aim is to  estimate three terms on the right-hand side of the previous inequality (\ref{eq25}).    Applying  the Gagliardo-Nirenberg inequality (\ref{eq16})   for   each   term that appeared on the right-hand side of (\ref{eq25}), we get 
\begin{align}\label{Final2}\nonumber
	& 	\left \| u(\kappa, \cdot)-v(\kappa, \cdot)\right\|_{L^{mp}}\\\nonumber
	&\lesssim\| u(\kappa, \cdot)-v(\kappa, \cdot)\|_{\dot {H}^s }^{\theta_1} \|u(\kappa, \cdot)\|_{L^2 }^{(1-\theta_1)}\\\nonumber
	&= (1+\kappa)^{-\frac{1}{\nu}\left(  {\gamma}+ {s\theta_1}\right)  }  \left\{ (1+\kappa)^{\frac{(s+\gamma)}{\nu} } \|u(\kappa, \cdot)-v(\kappa, \cdot)\|_{\dot {H}^s } \right\}^{\theta_1}   \left\{ (1+\kappa)^{\frac{\gamma}{\nu} } \|u(\kappa, \cdot)-v(\kappa, \cdot)\|_{L^2 }\right\}^{(1-\theta_1)}\\
	&\lesssim (1+\kappa)^{-\frac{1}{\nu}\left(  {\gamma}+ {s\theta_1}\right)  }\left\| u-v \right\|_{ X_s(T)}
\end{align}
and 
\begin{align}\label{Final1}\nonumber
	&	 	\left \| u(\kappa, \cdot)\right\|_{L^{mp}}^{p-1}\\\nonumber
	&\lesssim\| u(\kappa, \cdot)\|_{\dot {H}^s(\mathbb{H}^n)}^{(p-1)\theta_1} \|u(\kappa, \cdot)\|_{L^2(\mathbb{H}^n)}^{(p-1)(1-\theta_1)}\\\nonumber
	&= (1+\kappa)^{-\frac{(p-1)}{\nu}  (s\theta_1+\gamma)}  \left\{ (1+\kappa)^{\frac{(s+\gamma)}{\nu} } \|u(\kappa, \cdot)\|_{\dot {H}^s(\mathbb{H}^n)} \right\}^{(p-1)\theta_1}   \left\{ (1+\kappa)^{\frac{\gamma}{\nu} } \|u(\kappa, \cdot)\|_{L^2(\mathbb{H}^n)}\right\}^{(p-1)(1-\theta_1)}\\
	&\lesssim (1+\kappa)^{-\frac{(p-1)}{\nu}  (s\theta_1+\gamma)} \left\| u\right\|_{ X_s(T)}^{p-1},
\end{align}
for $\kappa\in [0, T]$ with $\theta_1=\frac{Q}{s}(\frac{1}{2}-\frac{1}{mp})\in [0, 1]$.  
Therefore, combining (\ref{eq25}), (\ref{Final2}), and (\ref{Final1}), we    obtain   
\begin{align*}
	\nonumber   & \left \| |u(\kappa, \cdot)|^p-|v(\kappa, \cdot)|^p \right\|_{\dot  H^{-\gamma}}  \\
	&\lesssim(1+\kappa)^{-\frac{1}{\nu}\left(  {\gamma}+ {s\theta_1}\right)  } (1+\kappa)^{-\frac{(p-1)}{\nu}  (s\theta_1+\gamma)} \left\| u-v \right\|_{ X_s(T)}   \left(\|u\|_{X_s(T)}^{p-1}+\|v\|_{X_s(T)}^{p-1}\right)\\
	& 
	\lesssim (1+\kappa)^{-\left(\frac{p-1}{\nu}+\frac{1}{\nu}\right) (\gamma+{Q}(\frac{1}{2}-\frac{1}{mp}))}  \left\| u-v \right\|_{ X_s(T)}   \left(\|u\|_{X_s(T)}^{p-1}+\|v\|_{X_s(T)}^{p-1}\right) \\
	&	= (1+\kappa)^{-\left(\frac{p-1}{\nu}+\frac{1}{\nu}\right) (\gamma+\frac{Q}{2}-\frac{1}{p}(\frac{Q}{2}+{\gamma}))}  \left\| u-v \right\|_{ X_s(T)}   \left(\|u\|_{X_s(T)}^{p-1}+\|v\|_{X_s(T)}^{p-1}\right) \\
	& 
	= (1+\kappa)^{- \frac{p}{\nu}(1-\frac{1}{p})(\gamma+\frac{Q}{2}) }  \left\| u-v \right\|_{ X_s(T)}   \left(\|u\|_{X_s(T)}^{p-1}+\|v\|_{X_s(T)}^{p-1}\right)\\
	&
	\lesssim(1+\kappa)^{- \frac{p}{\nu}\left( {\gamma}+\frac{Q}{2}\right) +   \frac{1}{\nu} ( {\gamma}+ \frac{Q}{2})}  \left\| u-v \right\|_{ X_s(T)}   \left(\|u\|_{X_s(T)}^{p-1}+\|v\|_{X_s(T)}^{p-1}\right). \nonumber
\end{align*}
Finally, using the given range of $p$, we see that $- \frac{p}{\nu}\left( {\gamma}+\frac{Q}{2}\right) +   \frac{1}{\nu} ( {\gamma}+ \frac{Q}{2})\leq 0$ and therefore
\begin{align}\label{Banach}
	\|N u-N v\|_{X_s(T)} \leq A \|u-v\|_{X_s(T)}\left(\|u\|_{X_s(T)}^{p-1}+\|v\|_{X_s(T)}^{p-1}\right),
\end{align}
for any $u, v \in X_s(T)$ and for some  positive constat $A.$
Also from (\ref{eqlinera}), we obtain 	
\begin{align}\label{Final banach}
	\|N u\|_{X_s(T)} \leq  B\left\|\left(u_{0}, u_{1}\right)\right\|_{ \mathcal{A}^s }+B\|u\|_{X_s(T)}^{p},
\end{align}  
for some positive constant $B$ with initial data space $\mathcal{A}^s : =(H^s\cap \dot  H^{-\gamma}) \times (L^2\cap \dot  H^{-\gamma})$. 
Consequently, for  some $r>1$,  we choose $R_0=rB\left\|\left(u_0, u_1\right)\right\|_{\mathcal{A}^s} $ with sufficiently small $\left\|\left(u_0, u_1\right)\right\|_{\mathcal{A}^s}<\varepsilon $  so that  
$$BR_0^p<\frac{R_0}{r} \quad \text{and} \quad 2AR_0^{p-1}<\frac{1}{r}.$$
Then     (\ref{Banach})   and (\ref{Final banach}) reduce to  
\begin{align}\label{Final banach1}
	\|N u\|_{X_s(T)} \leq  \frac{2R_0}{r} 
\end{align}  
and \begin{align}\label{Banach11}
	\|N u-N v\|_{X_s(T)} \leq  \frac{1}{r}\|u-v\|_{X_s(T)},			\end{align}
respectively for all $u, v\in \mathcal{B}(R_0):=\{u\in X_s(T): \|u\|_{X_s(T)}\leq R_0\}$.

Since   $\left\|\left(u_0, u_1\right)\right\|_{\mathcal{A}^s}<\varepsilon $ is sufficiently small,   (\ref{Final banach1}) implies that   $Nu \in X_s(T)$, that is, $N$ maps $X_s(T)$ into itself. 
On the other hand,   (\ref{Banach11}) implies that the map $N$ is a contraction mapping on the ball $\mathcal{B}(R_0)$ around the origin in the Banach space $X_s(T)$. Using  Banach's fixed point theorem,  we can say that there exists a uniquely determined fixed point $u^*$ of the operator $N$, which means $u^*=Nu^* \in X_s(T)$  for all positive $T$.  This implies that there exists a global-in-time small data Sobolev solution $u^*$ of the equation $ u^*=Nu^* $ in $ X_s(T)$, which also gives the solution to the semilinear damped wave equation (\ref{eq0010})  and this completes the proof of the theorem.  
\end{proof}

\subsection{Blow-up result} 
Now we prove blow-up of weak solutions in the subcritical case  $1<p<p(Q, \gamma, \nu )$ to the Cauchy problem (\ref{eq0010}).   
\begin{proof}[Proof of Theorem \ref{blow-up}] We apply the so-called test function method on the graded Lie group $\mathbb{G}$ in order to prove this result.   		For $R>1$, let us consider the  test function as
$$
\varphi_R(t, x)=\Phi\left(\frac{|x|}{R}\right) \Phi\left(\frac{t}{R^\nu}\right),
$$
where 	 $\Phi: \mathbb{R}_{+} \rightarrow[0,1]$ is the smooth cut-off function such that
$$
\Phi(r)=\left\{\begin{array}{ll}
	1, & 0 \leq r \leq 1 \\
	\searrow, & 1 \leq r \leq 2 \\
	0, & r \geq 2.
\end{array}\right.
$$
By changing variables $x=R \tilde{x}$ and $t=R^\nu \tilde{t}$ and the fact that $R>1$, we obtain the following   estimates on $\Omega_T:= (0, T) \times \mathbb{G},$ (see \cite{Yang})\vspace{0.2cm}
\begin{itemize}
	\item $\displaystyle \quad 
	\int_{\Omega_T}\left|\frac{\partial }{\partial t} \varphi_R(t, x)\right|^{\frac{p}{p-1}} \varphi_R(t, x)^{\frac{-1}{p-1}} d x d t \lesssim R^{-\frac{\nu p}{p-1}+Q+\nu},  
	$\vspace{0.5cm}
	\item $\displaystyle \quad 
	\int_{\Omega_T}\left|\mathcal{R} \varphi_R(t, x)\right|^{\frac{p}{p-1}} \varphi_R(t, x)^{\frac{-1}{p-1}} d x dt  \lesssim R^{-\frac{\nu p}{p-1}+Q+\nu},
	$\vspace{0.5cm}
	\item $\displaystyle \quad 
	\int_{\Omega_T}\left|\frac{\partial^2 }{\partial t^2}  \varphi_R(t, x)\right|^{\frac{p}{p-1}} \varphi_R(t, x)^{\frac{-1}{p-1}} d x d t \lesssim  R^{-\frac{2\nu p}{p-1}+Q+\nu}\leq  R^{-\frac{\nu p}{p-1}+Q+\nu}.
	$
\end{itemize}
For  $a, b \geq 0$,  using  $\varepsilon$-Young’s inequality
$$a b \leq \varepsilon a^r+C(\varepsilon) b^{r^{\prime}}, \quad \frac{1}{r}+\frac{1}{r^{\prime}}=1,$$
from the relation (\ref{2999}),   we obtain
\begin{align}\nonumber\label{eq30}
	&	\int_0^T \int_{\mathbb{G}}|u(t, x)|^p \varphi_R(t, x) \;d x \;d t+\varepsilon\int_{\mathbb{G}}\left(u_0(x)+u_1(x)\right) \varphi_R(0, x) \;d x\\\nonumber
	&\leq \int_0^T \int_{\mathbb{G}} |u(t, x)|\left(|\partial_t^2 \varphi_R(t, x)|+|\mathcal{R}\varphi_R(t, x)|+|\partial_t \varphi_R(t, x)\right)| \;d x \;d t\\\nonumber
	&=\int_0^T \int_{\mathbb{G}} \varphi_R(t, x)^{\frac{1}{p}}|u(t, x)| \left(|\partial_t^2 \varphi_R(t, x)|+|\mathcal{R}\varphi_R(t, x)|+|\partial_t \varphi_R(t, x)|\right) \varphi_R(t, x)^{-\frac{1}{p}} \;d x \;d t\\\nonumber
	&\leq \frac{1}{2}\int_0^T \int_{\mathbb{G}} \varphi_R(t, x)|u(t, x)|^p \;dx\;dt\\\nonumber&\quad +C	\int_0^T \int_{\mathbb{G}}  \left(|\partial_t^2 \varphi_R(t, x)|^{\frac{p}{p-1}}+|\mathcal{R}\varphi_R(t, x)|^{\frac{p}{p-1}}+|\partial_t \varphi_R(t, x)|^{\frac{p}{p-1}}\right) \varphi_R(t, x)^{-\frac{1}{p-1}} \;d x \;d t\\
	&\leq \frac{1}{2}\int_0^T \int_{\mathbb{G}} \varphi_R(t, x)|u(t, x)|^p \;dx\;dt 
	+C R^{-\frac{\nu p}{p-1}+Q+\nu},
\end{align}
where we used the fact  that $\partial_t \varphi_R \leq 0$. 	Thus  from     (\ref{eq30}), we  have 
\begin{align}\label{eq31}
	\frac{1}{2}	\int_0^T \int_{\mathbb{G}}|u(t, x)|^p \varphi_R(t, x) \;d x \;d t\leq C R^{-\frac{\nu p}{p-1}+Q+\nu} -\varepsilon \int_{\mathbb{G}}(u_0(x)+u_1(x) ) \varphi_R(0, x) \;d x.
\end{align} 		
Now we   find  estimate for $$	\int_{\mathbb G}\left(u_{0}(x)+u_{1}(x)\right) \varphi_R(0,x)\;dx$$  using the assumption (\ref{eq32}).   Prior to that, under the assumptions (\ref{eq32}), we must guarantee that the set of all initial data $\left(u_0, u_1\right)$ in $ \dot {H}^{-\gamma} \times \dot H^{-\gamma}$ is not empty. To prove this, first we claim that $\mathcal{D}_{Q, \gamma} \cap\left(L^{\frac{2Q}{Q+2\gamma}} \times L^{\frac{2Q}{Q+2\gamma}}  \right) \neq \emptyset$ for $\frac{2Q}{Q+2\gamma}>1$, where     the  set $\mathcal{D}_{Q, \gamma}$ is given by  	
\begin{align*}
	\mathcal{D}_{Q, \gamma}&:=\{\left(u_{0}, u_{1}\right)\in L^1_{\text{loc}}(\mathbb{G}) \times L^1_{\text{loc}}(\mathbb{G}): u_0(x)+u_1(x) \geq C_1 \langle x \rangle^{-Q\left(\frac{1}{2}+\frac{\gamma}{Q}\right)}(\log (e+|x|))^{-1}\}.
\end{align*} In particular, consider the functions	  
\begin{align*}
	u_{0}(x)=u_{1}(x)&=  C_1  \langle x \rangle^{-Q\left(\frac{1}{2}+\frac{\gamma}{Q}\right)}(\log (e+|x|))^{-1}.
\end{align*}
Then using the polar decomposition   (\ref{polardeco}) on   $\mathbb{G}$, we obtain
$$
\begin{aligned}
	\int_{\mathbb G}|u_{0}(x)| ^{\frac{2Q}{Q+2\gamma}}  {d} x& =  C_1^{\frac{2Q}{Q+2\gamma}}  
	\int_{\mathbb G}\langle x \rangle^{-Q\left(\frac{1}{2}+\frac{\gamma}{Q}\right)\times \frac{2Q}{Q+2\gamma}}(\log (e+|x|))^{-\frac{2Q}{Q+2\gamma}}\;dx\\ 
	& =  C_1^{\frac{2Q}{Q+2\gamma}}  \int_{\mathbb G}\langle x \rangle^{-Q }(\log (e+|x|))^{-\frac{2Q}{Q+2\gamma}}\;dx\\ 
	&  \lesssim    C_1^{\frac{2Q}{Q+2\gamma}} \int_{0}^{\infty} \langle r \rangle^{-Q} r^{Q-1}(\log ( {e}+ r))^{-\frac{2Q}{Q+2\gamma}}dr\\
	& =  C_1^{\frac{2Q}{Q+2\gamma}} \int_{0}^{\infty} \langle r \rangle^{-1}  (\log ( {e}+ r))^{-\frac{2Q}{Q+2\gamma}}dr<\infty,
\end{aligned}
$$ for $\frac{2Q}{Q+2\gamma}>1$.
This shows that  $u_{0}, u_{1} \in L^{\frac{2Q}{Q+2\gamma}} $. Since $\frac{Q+2 \gamma}{2 Q}-\frac{1}{2}=\frac{\gamma}{Q}$ with $\gamma \in\left(0, \frac{Q}{2}\right)$, according to the Hardy-Littlewood-Sobolev inequality (\ref{eq177}), we have $ L^{\frac{2Q}{Q+2\gamma}}  \subset \dot {H}^{-\gamma}$.  Consequently, in any case, we may claim that
$$
\mathcal{D}_{Q, \gamma} \cap\left(\dot {H}^{-\gamma}\times \dot {H}^{-\gamma} \right) \neq \emptyset, \quad \text{for} ~\gamma \in\left(0, \frac{Q}{2}\right).
$$	
Now from our assumption (\ref{eq32}), for $R \gg 1$,   we obtain 
\begin{align}\label{eq33}\nonumber
	&	\int_{\mathbb G }\left(u_{0}(x)+u_{1}(x)\right) \varphi_R(0,x)\;dx \\\nonumber
	& \geq C_1 \int_{|x|\leq R}    \langle x \rangle^{-Q\left(\frac{1}{2}+\frac{\gamma}{Q}\right)}(\log (e+|x|))^{-1} dx \\
	& \geq C_1R^{Q-Q(\frac{1}{2}+\frac{\gamma}{Q} )}(\log R)^{-1}=C_1 R^{\frac{Q}{2}-\gamma} (\log R)^{-1}.
\end{align}	
By denoting $$	I_R=	\int_0^T \int_{\mathbb G}|u(t, x)|^p \varphi_R(t, x) \;d x \;d t,$$  from (\ref{eq31}) and (\ref{eq33}), we have
\begin{align}\label{eq34} 
	0\leq 	\frac{I_R}{2}    \leq C R^{-\frac{\nu p}{p-1}+Q+\nu}-C_1 \varepsilon R^{\frac{Q}{2}-\gamma}(\log R)^{-1}. 
\end{align} 
By the assumption $1<p <p_{\text {Crit }}(Q, \gamma, \nu)=1+\frac{2\nu}{Q+2\gamma},$ we have $Q+\nu-\nu p'-(\frac{Q}{2}-\gamma )<0$ and consequently,  it is easy to see  that 
\begin{equation}\label{bhah}
	R^{Q+\nu-2p'-\frac{Q}{2}+\gamma} \log R < \frac{\varepsilon C_1}{C},
\end{equation} for large $R \gg 1.$ Thus  from \eqref{bhah} and \eqref{eq34}, we obtain 
$$0\leq 	\frac{I_R}{2}    \leq C R^{-\frac{\nu p}{p-1}+Q+\nu}-C_1 \varepsilon R^{\frac{Q}{2}-\gamma}(\log R)^{-1}<0,$$
which is a contradiction and this completes the proof of the blow-up part. \end{proof}

\begin{rem}
In order to ensure that the function $\varphi_R$ belongs to the space $\mathcal{C}_0^{\infty}([0, T) \times \mathbb{G})$, the scaling factor $R^\nu$ in the bump function with respect to the time variable must be dominated by the lifespan $T_{w,\varepsilon}$ of the weak solution.  To obtain an upper bound estimate for the lifespan, we consider increasing $R \uparrow T_{w,\varepsilon}^{ \frac{1}{\nu}}$ in (\ref{eq34}).  Consequently, similar to \eqref{eq34}, a contradiction  exists if  we have  $$
{T_{w,\varepsilon}^{\frac{Q+\nu}{\nu}-p'}}<C \varepsilon T_{w,\varepsilon}^{\frac{{\frac{Q}{2}-\gamma}}{\nu}}(\log T_{w,\varepsilon})^{-1},	$$		i.e., 
\begin{align}\label{upper}
	T_{w,\varepsilon} \leq C\varepsilon^{-\left(\frac{1}{p-1}-\left(\frac{Q}{2\nu}+\frac{\gamma}{\nu}\right)\right)^{-1}},
\end{align}
where  $C$ is positive constant and independent of $\varepsilon$ and $ p$.  Therefore (\ref{upper}) offers an upper bound estimate for the  lifespan for the local in time weak solutions to the Cauchy problem  (\ref{eq0010}).  
\end{rem}

\subsection{Sharp lifespan estimate} 		 From  (\ref{upper}) we have an upper bound estimate for the  lifespan for the local in time weak solutions to (\ref{eq0010}). In order to find a sharp estimate for the lifespan, our next aim is to find the lower bound estimates of lifespan.   We will employ several notations, namely,  the particular data space $\mathcal{A}^1$ and the particular evolution space $X_1(T)$, which were considered in Section \ref{sec3}.   Let   $T_{m,\varepsilon}$ be the lifespan of a mild solution $u $  to the Cauchy problem (\ref{eq0010}). Then, we have the following result regarding the lower bound for the lifespan of mild solution.

\begin{theorem}\label{lower bound}  Let ${\mathbb{G}}$ be a graded Lie group of  homogeneous dimension $Q$ and let $\mathcal{R}$ be a positive Rockland operator of homogeneous degree $\nu \geq 2.$
Let $\gamma \in (0, \tilde{\gamma} ) $ and let the  exponent $p$ satisfy $1<p<p_{\text {Crit }}(Q, \gamma, \nu)
$ such that
\begin{align}\label{eq35}
	1+\frac{2 \gamma}{Q} \leq p \left\{\begin{array}{ll}
	<\infty & \text { if } Q \leq 2, \\
	\leq   \frac{Q}{Q-2}& \text { if } Q>2.
\end{array}\right. 
\end{align}
We also   assume that  $\left(u_0, u_1\right) \in \mathcal{A}^1 $ such that  $\|(u_0, u_1)\|_{\mathcal{A}^1 }<\varepsilon$.  Then, there exists a constant $\varepsilon_0$ such that for every $\varepsilon \in\left(0, \varepsilon_0 \right]$, the lifespan $T_{m, \varepsilon}$ of  mild solutions  $u$ to the Cauchy problem (\ref{eq0010}) satisfies the following lower bound condition:
$$
T_{m,\varepsilon} \geq  C\varepsilon^{-\left(\frac{1}{p-1}-\left(\frac{Q}{2\nu}+\frac{\gamma}{\nu}\right)\right)^{-1}},
$$
where the positive constant  $C$ is  independent of $\varepsilon$, but  may  depends on $p, Q, \gamma$ as well as $\left\|\left(u_0, u_1\right)\right\|_{\mathcal{A}^1 }$.
\end{theorem}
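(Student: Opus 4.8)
The plan is to run the same Banach fixed point scheme used in the proof of Theorem~\ref{well-posed}, but now on a \emph{finite} interval $[0,T]$ with $s=1$, keeping careful track of how every constant depends on $T$ in the subcritical range $1<p<p_{\text{Crit}}(Q,\gamma,\nu)$. As in Section~\ref{sec3}, a mild solution on $[0,T]$ is a fixed point of $N u = u^{\mathrm{lin}} + u^{\mathrm{non}}$ in the space $X_1(T)$. The linear part is estimated exactly as in (\ref{eq14})--(\ref{eq1444}) and is $T$-independent, so that $\|u^{\mathrm{lin}}\|_{X_1(T)} \lesssim \|(u_0,u_1)\|_{\mathcal{A}^1} < \varepsilon$.

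The only place where the subcritical hypothesis alters the analysis is the time integral arising from Duhamel's term. Introduce the exponent
\[
\sigma := 1-\frac{p-1}{\nu}\Big(\gamma+\frac{Q}{2}\Big),
\]
which is \emph{strictly positive} precisely when $p<p_{\text{Crit}}(Q,\gamma,\nu)$. Reusing the nonlinear bound (\ref{eq20}), namely $\||u(\kappa,\cdot)|^p\|_{L^2\cap\dot{H}^{-\gamma}}\lesssim (1+\kappa)^{-\frac{p-1}{\nu}(\gamma+\frac{Q}{2})}\|u\|_{X_1(T)}^p$ (valid on the Gagliardo--Nirenberg range (\ref{eq35})), and inserting it into the splittings (\ref{eq222222}) and (\ref{eq22}), I would observe that the integral $\int_0^{t/2}(1+\kappa)^{-\frac{p-1}{\nu}(\gamma+\frac{Q}{2})}\,d\kappa$, which \emph{converged} in the supercritical proof, now grows like $(1+t)^{\sigma}$ because its exponent exceeds $-1$. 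Since $\gamma<\tilde{\gamma}<\nu$, the integrals over $[\frac{t}{2},t]$ are controlled by the case $\gamma<\nu$ of (\ref{gamma<nu}); carrying the weights $(1+t)^{\gamma/\nu}$ and $(1+t)^{(1+\gamma)/\nu}$ through both the $L^2$ and the $\dot{H}^1$ computations, every contribution collapses to the single factor $(1+t)^{\sigma}$. This yields the two $T$-dependent inequalities
\[
\|N u\|_{X_1(T)} \le C\varepsilon + C(1+T)^{\sigma}\|u\|_{X_1(T)}^p,
\]
\[
\|N u-N v\|_{X_1(T)} \le C(1+T)^{\sigma}\|u-v\|_{X_1(T)}\big(\|u\|_{X_1(T)}^{p-1}+\|v\|_{X_1(T)}^{p-1}\big).
\]

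With these in hand I would close the fixed point on a ball of radius $R\simeq\varepsilon$ in $X_1(T)$: the self-mapping condition $C\varepsilon + C(1+T)^{\sigma} R^p\le R$ and the contraction condition $2C(1+T)^{\sigma} R^{p-1}<1$ both reduce to the single requirement $(1+T)^{\sigma}\varepsilon^{p-1}\le c_0$ for a suitably small $c_0>0$. Hence a unique mild solution exists on $[0,T]$ for every $T$ with $1+T\le c_0^{1/\sigma}\varepsilon^{-(p-1)/\sigma}$, so that $T_{m,\varepsilon}\gtrsim \varepsilon^{-(p-1)/\sigma}$. The argument is then finished by the elementary identity
\[
\frac{p-1}{\sigma}=\Big(\frac{1}{p-1}-\Big(\frac{Q}{2\nu}+\frac{\gamma}{\nu}\Big)\Big)^{-1},
\]
which converts this into the asserted lower bound and, compared with the upper bound (\ref{upper}), shows the lifespan estimate is sharp. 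I expect the main obstacle to be purely the $T$-bookkeeping: one must verify that both the $[0,\frac{t}{2}]$ and the $[\frac{t}{2},t]$ pieces, in the $L^2$ and the $\dot{H}^1$ norms alike, produce the \emph{same} power $(1+t)^{\sigma}$ and nothing worse, since any mismatch there would destroy the sharp exponent.
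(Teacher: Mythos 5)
Your proposal is correct and follows essentially the same route as the paper: the paper's proof likewise reuses the nonlinear estimates from Theorem \ref{well-posed} with $s=1$, observes that in the subcritical range the integral $\int_0^{t/2}(1+\kappa)^{-\frac{p-1}{\nu}(\gamma+\frac{Q}{2})}\,d\kappa$ now grows, and arrives at exactly your exponent (the paper's $\alpha(p,Q,\gamma,\nu)=1-\frac{p-1}{\nu}\bigl(\gamma+\frac{Q}{2}\bigr)$ is your $\sigma$), together with the same algebraic identity converting $\varepsilon^{-(p-1)/\sigma}$ into the stated lifespan bound. The only cosmetic difference is in the final step: you close a $T$-dependent contraction directly, whereas the paper starts from the a priori inequality $\|u\|_{X_1(T)}\le C\varepsilon+D(1+T)^{\alpha}\|u\|_{X_1(T)}^p$ for an existing mild solution and runs a continuity/bootstrap argument on $T^*:=\sup\{T:\|u\|_{X_1(T)}\le M\varepsilon\}$ — both yield the same conclusion.
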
 
\begin{proof} 
Here, we observe that local-in-time mild solutions can be directly expressed and estimated using the definition of mild solutions (\ref{f2inr}).  By following a similar approach as the one used to derive estimate (\ref{eq222222}) in Section \ref{sec4}, we obtain
\begin{align}\label{eq27}\nonumber
	(1+t)^{\frac{\gamma}{\nu}}\|u(t, \cdot)\|_{L^2} \lesssim &  \left\|\left(u_0, u_1\right)\right\|_{\mathcal{A}^1 }  +  \int_0^{\frac{t}{2}}(1+\kappa)^{-\frac{p}{\nu} (  {\gamma}+\frac{Q}{2} )+  \frac{1}{\nu}\left(  {\gamma}+\frac{Q}{2}\right)  } d\kappa ~  \left\| u\right\|_{ X_1(T)}^p\\\nonumber
	& \qquad +(1+t)^{-\frac{p}{\nu} (  {\gamma}+\frac{Q}{2} )+  \frac{1}{\nu}\left(  {\gamma}+\frac{Q}{2}\right) +\frac{\gamma}{\nu} }  \int_{\frac{t}{2}}^t(1+t-\kappa)^{-\frac{\gamma}{\nu}} \mathrm{~d} \kappa\|u\|_{X_1(T)}^p\\\nonumber
	\lesssim &  \left\|\left(u_0, u_1\right)\right\|_{\mathcal{A}^1 }  +  \int_0^{\frac{t}{2}}(1+\kappa)^{-\frac{p}{\nu} (  {\gamma}+\frac{Q}{2} )+  \frac{1}{\nu}\left(  {\gamma}+\frac{Q}{2}\right)  } d\kappa ~  \left\| u\right\|_{ X_1(T)}^p\\
	& \qquad +(1+t)^{-\frac{p}{\nu} (  {\gamma}+\frac{Q}{2} )+  \frac{1}{\nu}\left(  {\gamma}+\frac{Q}{2}\right) +1 }  \|u\|_{X_1(T)}^p,
\end{align}
because of $\gamma \in (0, \min\{\nu, \frac{Q}{2}\} )$.  Note that, from (\ref{eq17}),  the  restriction  on  $p$ as 
$$
1+\frac{2\gamma}{Q} \leq p  \left\{\begin{array}{ll}
	<\infty & \text { if } Q \leq 2, \\
	\leq   \frac{Q}{(Q-2)}& \text { if } Q>2,
\end{array}\right. 
$$
is due to the 	 application of the Gagliardo-Nirenberg inequality.  
Due to the fact that  $
1<p<p_{\text {Crit }}(Q, \gamma, \nu)=1+\frac{2\nu}{Q+2\gamma}$, we immediately get $$-\frac{p}{\nu} (  {\gamma}+\frac{Q}{2} )+  \frac{1}{\nu}\left(  {\gamma}+\frac{Q}{2}\right)  > -  \frac{2\gamma+Q}{2\nu}   \left(1+\frac{2\nu}{Q+2\gamma}\right) + \frac{2\gamma+Q}{2\nu}=-1
$$
and   from (\ref{eq27}), we have 
\begin{align}\label{eq28}
	(1+t)^{\frac{\gamma}{2}}\|u(t, \cdot)\|_{L^2} \lesssim  \left\|\left(u_0, u_1\right)\right\|_{\mathcal{A}^1}+(1+t)^{-\frac{p}{\nu} (  {\gamma}+\frac{Q}{2} )+  \frac{1}{\nu}\left(  {\gamma}+\frac{Q}{2}\right) +1 }  \|u\|_{X_1(T)}^p.
\end{align}
Choosing $s=1$ in  (\ref{eq22}), we get 
\begin{align}\label{eq29}
	(1+t)^{\frac{1+\gamma}{\nu}}	\left\|u(t, \cdot)\right\|_{\dot {H}^1}
	\lesssim  \left\|\left(u_0, u_1\right)\right\|_{\mathcal{A}^1}
	+(1+t)^{-\frac{p}{\nu} (  {\gamma}+\frac{Q}{2} )+\frac{Q}{2\nu}  +\frac{\gamma}{\nu}+1} \|u\|_{X_1(T)}^p .
\end{align}
Thus from  (\ref{eq28}) and (\ref{eq29}), we can write 
\begin{align}\label{eq2999}
	\|u\|_{X_1(T)} \leq   C\varepsilon +D (1+t)^{\alpha(p, Q, \gamma, \nu)}\|u\|_{X_1(T)}^p,
\end{align}
where $\alpha(p, Q, \gamma, \nu):=-\frac{p}{\nu} (  {\gamma}+\frac{Q}{2} )+\frac{Q}{2\nu}  +\frac{\gamma}{\nu}+1>0$ and $C , D$ are two positive constants independent of $\varepsilon$ and $T$.

Now 	for a sufficiently large   constant $M>0$ (to be choosen  later),	let us   introduce  
$$
T^*:=\sup \left\{T \in\left[0, T_{m,\varepsilon}\right) \text { such that } \mathcal{X}(T):=\|u\|_{X_1(T)} \leq M \varepsilon\right\}.
$$
Using the fact that  $\mathcal{X}\left(T^*\right) \leq M \varepsilon$, from (\ref{eq2999}),	  we obtain 
\begin{align}\label{eq299}\nonumber
	\mathcal{X}\left(T^*\right)=\|u\|_{X_1(T^*)} 
	& \leq \left( C\varepsilon+D\left(1+T^*\right)^{\alpha(p, Q, \gamma, \nu)} M^p \varepsilon^{p}\right)\\ 
	& = \varepsilon \left( C+D\left(1+T^*\right)^{\alpha(p, Q, \gamma, \nu)} M^p \varepsilon^{p-1}\right)	
	<M \varepsilon,
\end{align}
for sufficiently large $M$,   provided  $2C<M$  and 
$$
2D\left(1+T^*\right)^{\alpha(p,Q, \gamma, \nu)} M^{p-1} \varepsilon^{p-1}<1 .
$$
It is crucial to observe that the function $\mathcal{X}=\mathcal{X}(T)$ is continuous for $T \in\left(0, T_{m,\varepsilon}\right)$.  	However, inequality (\ref{eq299}) provides  the existence of a time $T_0 \in \left(T^*, T_{m,\varepsilon} \right)$ such that $\mathcal{X}\left(T_0\right) \leq M \varepsilon$. This contradicts the fact that     $T^*$ is the supremum.
In other words, we need to ensure strict compliance with the following requirement:
$$
D\left(1+T^*\right)^{\alpha(p, Q, \gamma, \nu)} M^{p-1} \varepsilon^{p-1} \geq 1,
$$
that is 
$$
1+T^*   \geq D^{-\frac{1}{\alpha(p, Q, \gamma, \nu)}} M^{-\frac{p-1}{\alpha(p, Q, \gamma, \nu )}}\varepsilon^{-\frac{p-1}{\alpha(p, Q, \gamma, \nu)}}.
$$
Consequently, we can deduce  the estimated time of the blow-up  as 
\begin{align}\label{lower}
	T_{m,\varepsilon} \geq C \varepsilon^{-\frac{p-1}{\alpha(p, Q, \gamma, \nu )}}
	= C \varepsilon^{-\frac{\nu(p-1)}{(1-p) (  {\gamma}+\frac{Q}{2} )+\nu}}=C \varepsilon^{-\left(\frac{1}{p-1}-\left(\frac{Q}{2\nu}+\frac{\gamma}{\nu}\right)\right)^{-1}}, 
\end{align}
which is the required lower-bound estimate for the lifespan of a mild solution.  \end{proof}

\section*{Acknowledgement}
VK and MR are supported by the FWO Odysseus 1 grant G.0H94.18N: Analysis and Partial Differential Equations, the Methusalem program of the Ghent University Special Research Fund (BOF) (Grant number 01M01021), and by FWO Senior Research Grant G011522N.  
MR is also supported by EPSRC grant EP/V005529/1.  SSM also thanks Ghent Analysis \& PDE Center of Ghent University for the financial support of his visit to Ghent University during which this work was started. SSM is also supported by the DST-INSPIRE Faculty Fellowship DST/INSPIRE/04/2023/002038.

		\section{Data availability statement}
		The authors confirm that the data supporting the findings of this study are available within the article  and its supplementary materials.

		\section{Declarations}\vspace{0.1cm}
	 \textbf{Ethical Approval:} Not applicable.\vspace{0.1cm}

		\textbf{Competing interests:} No potential competing of interest was reported by the author.\vspace{0.1cm}


		\textbf{Availability of data and materials:} All the data uned are  within the manuscript.

\end{document}